\newtheorem{thm}{Theorem}
\newtheorem{lemma}{Lemma}[section]
\newtheorem{thmtool}{Theorem}[section]
\newtheorem{lem}[thmtool]{Lemma}
\newtheorem{prop}[thmtool]{Proposition}
\newtheorem*{conjecture*}{Conjecture}
\newtheorem*{question*}{Question}
\newcommand{\A}{\mathcal{A}}
\newcommand{\B}{\mathcal{B}}
\newcommand{\pat}[2]{\operatorname{pat}(#1,#2)}
\newcommand{\inte}[1]{\operatorname{IP}(#1)}
\newcommand{\alt}[1]{\operatorname{AP}(#1)}
\newcommand\numberthis{\addtocounter{equation}{1}\tag{\theequation}}
\title{Set families with a forbidden pattern}
\author{Ilan Karpas \thanks{The Einstein Institute of Mathematics, 
The Hebrew University of Jerusalem, Jerusalem, Israel. Email: ilan.karpas@mail.huji.ac.il.} \and Eoin Long \thanks{School of Mathematical Sciences, Tel Aviv University, Tel Aviv, Israel. E-mail: eoinlong@post.tau.ac.il.}}
\date{}
\begin{document}

\maketitle

\begin{abstract}
	A \emph{balanced pattern} of order $2d$ is an element 
	$P \in \{+,-\}^{2d}$, where both signs appear $d$ times. Two sets 
	$A,B \subset [n]$ form a $P$-pattern, which we denote by 
	$\pat A B = P$, if $A\triangle B = \{j_1,\ldots ,j_{2d}\}$ 
	with $1\leq j_1<\cdots < j_{2d}\leq n$ and $\{i\in [2d]: P_i = + \} 
	= \{i\in [2d]: j_i \in A \setminus  B\}$. We say ${\cal A} 
	\subset {\cal P}[n]$ is $P$-free if $\pat A B \neq P$ for all 
	$A,B \in {\cal A}$. We consider the following extremal 
	question: how large can a family ${\cal A} \subset {\cal P}[n]$ 
	be if $\A $ is $P$-free?
	
	We prove a number of results on the sizes of such families. 
	In particular, we show that for some fixed $c>0$, if $P$ 
	is a $d$-balanced pattern with 
	$d < c \log \log n$ then $|\A | = o(2^n)$. We then give stronger bounds in the cases when 
	(i) $P$ consists of $d$ $+$ signs, followed by $d$ $-$ signs 
	and (ii) $P$ consists of alternating signs. In both cases, if 
	$d = o(\sqrt n)$ then $|\A | = o(2^n)$. In the case of (i), 
	this is tight.		 
\end{abstract}

\section{Introduction}

A central goal in extremal set theory is to understand how large a set family 
 can be subject to some restriction on the intersections of its elements. Given ${\cal L} \subset {\mathbb N} \cup \{0\}$, we say that a set family $\A $ is ${\cal L}$-intersecting if $|A \cap B| \in {\cal L}$ for all $A,B \in \A $. Taking ${\cal L}_t =\{s \in {\mathbb N}: s\geq t\}$, a fundamental theorem of Erd\H{o}s, Ko and Rado \cite{Erdos-Ko-Rado} shows that ${\cal L}_t$-intersecting families $\A \subset \binom {[n]}{k}$ satisfy $|\A | \leq  \binom {n-t}{k-t}$, provided $n\geq n_0(k,t)$.
Another important theorem due to Frankl and F\"uredi \cite{Frankl-Furedi} shows that if ${\cal L}_{\ell , \ell '} := \{s < \ell \mbox { or } s\geq k-\ell '\} $, then any ${\cal L}_{\ell ,\ell '}$-intersecting family ${\cal A} \subset \binom {[n]}{k}$ satisfies $|\A | \leq c n^{\max (\ell , \ell ')}$, for some constant $c$ depending on $k, \ell$ and $\ell '$. See \cite{Babai-Frankl}, \cite{Bollobas}, \cite{Frankl}, \cite{Furedi} for an overview of this extensive topic.
 
Here we are concerned with understanding the effect of restricting the \emph{pattern} formed between elements of a set family. A \emph{difference pattern} or \emph{pattern} of order $t$ is an element $P \in \{+,-\}^t$. Given such a pattern $P$, let $S_+(P) = \{i\in [t]: P_i = +\} \subset [t]$ and $s_+(P) = |S_+(P)|$. Define $S_-(P)$ and $s_-(P)$ analogously. Two sets $A,B \subset [n]$ form a \emph{difference pattern} $P$ if: 
	\begin{enumerate}[(i)]
		\item $A \triangle B = \{j_1,\ldots ,j_t\}$ with $j_1<\cdots < j_t$, and
		\item $\{i\in [t]: P_i = + \} = \{i\in [t]: j_i \in A \setminus  B\}$.
	\end{enumerate} 
We denote this by writing $\pat A B = P$. A family of subsets $\A \subset {\cal P}[n]$ is \emph{$P$-free} if $\pat A B \neq P$ for all $A,B \in {\cal A}$. In this paper we consider the following natural question: given a pattern $P$, how large can a family ${\cal A} \subset {\cal P}[n]$ be if it is $P$-free? 

First note the following simple observation. If $s_+(P) \neq s_-(P)$ then large $P$-free families exist. Indeed, if $|s_+(P) - s_-(P)| = m >0$ then the following families are ${P}$-free: $${\cal B}_1 = \{A \subset [n]: |A| \in [0,m-1] \mod 2m\}; \quad {\cal B}_2 = \{A \subset [n]: |A| \in [m,2m-1] \mod 2m \}.$$ Clearly either $|{\cal B}_1| \geq 2^{n-1}$ or $|{\cal B}_2| \geq 2^{n-1}$. We will therefore focus on the case when $s_+(P) = s_-(P) = d$. We say that such patterns are $d$-\emph{balanced}. For a balanced pattern $P$ it is only possible that $\pat A B = P$ if $|A| = |B|$. Thus, our question on balanced patterns essentially reduces to a question for uniform families. Given $0\leq k \leq n$, define 
$$f(n,k,P) := \max \Big \{ |\A |: P\mbox{-free families } \A \subset \binom {[n]}{k} \Big \}.$$ Let $f(n,k,d) = \max \{ f(n,k,P): P \mbox{ is }d\mbox{-balanced}\}$. We will also write $\delta (n,k,P)$ and $\delta (n,k,d)$ for the corresponding extremal densities, i.e. $\delta (n,k,P) := f(n,k,P) /\binom {n}{k}$, 
and $\delta (n,k,d) := f(n,k,d) /\binom {n}{k}$.  Note also that if ${\cal A} \subset \binom {[n]}{k}$ is $P$-free then the family ${\cal A}^c = \{[n]\setminus A: A\in {\cal A}\} 
\subset \binom {[n]}{n-k}$ is also $P$-free. Therefore $f(n,k,P) = f(n,n-k,P)$ and it suffices to bound $f(n,k,P)$ for $k\leq n/2$.

Our first aim is to prove a density result for $d$-balanced patterns of small order. In this context, first note that for any fixed $k \geq 2d-1$ and taking $\ell = k -d$ and $\ell ' = d-1$, the Frankl-F\"uredi theorem shows that if $\A \subset \binom {[n]}{k}$ with $|\A | \gg n^{k-d}$ then there are $A,B \in \A $ with $|A \triangle B| = 2d$, i.e. $A$ and $B$ form a $P$-pattern for \emph{some} $d$-balanced pattern $P$. It therefore natural to ask whether we already get density results for all fixed patterns for $k\geq k_0(d)$ large enough? This is easily seen to be false however. For any $k\in \mathbb N$, consider the family $\A _0\subset \binom {[n]}{k}$ given by 
$$\A _0= \Big \{A \subset [n]: \Big |A \cap \Big [\frac{(i-1)n}{k},\frac {in}{k} \Big ) \Big | = 1 \mbox{ for all }i\in [k] \Big \}.$$ 
Then $|\A _0| \geq c_kn^k$ for some absolute constant $c_k >0$, but it is easily seen that $\A _0$ does not contain the pattern $++--$. Therefore, there does not exist a density theorem for $d$-balanced patterns in subsets of $\binom {[n]}{k}$ with fixed $k$, as in the Frankl-F\"uredi theorem. 

Our first result shows that such a density theorem does hold for $k$ growing with $n$.

\begin{thm} 
	\label{thm: general pattern theorem}
	Given $d,k,n \in {\mathbb N}$ with $2k \leq n$ and 
	taking $a_d= (8d)^{5d}$ and $c_d = 6d8^{-d}$ we have 
	$$\delta (n,k,d) \leq a_{d} k^{-c_d}.$$
\end{thm}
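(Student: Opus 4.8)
The plan is to prove the bound by induction on $d$, using two ingredients: a ``link'' averaging that moves between different uniformities, and a structural move that exploits a $+-$ ``descent'' in $P$ to reduce a $P$-free family to a $(d-1)$-balanced problem plus a one-dimensional (the $d=1$ case) sub-problem living inside a ``gap''.

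First the basic tool. For a set $C$ with $|C|=t$, the link $\mathcal L_C:=\{L:C\cup L\in\A,\ L\cap C=\emptyset\}$ of a $P$-free family $\A\subset\binom{[n]}{k}$ is again $P$-free as a $(k-t)$-uniform family on $[n]\setminus C$, since $(C\cup L_1)\triangle(C\cup L_2)=L_1\triangle L_2$ with the same induced pattern for $L_1,L_2\in\mathcal L_C$. Double counting $\sum_C|\mathcal L_C|=\binom{k}{t}|\A|$ against $|\mathcal L_C|\le f(n-t,k-t,P)$ and simplifying binomials gives the monotonicity $\delta(n,k,P)\le\delta(n-t,k-t,P)$, which is used to normalise parameters. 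For the base case $d=1$ take $P=+-$ (WLOG, since the reversal $i\mapsto n+1-i$ of $[n]$ and the swap $+\leftrightarrow-$ both preserve $\delta$); for each $C\in\binom{[n]}{k-1}$ the set $\{x\notin C:C\cup\{x\}\in\A\}$ can contain no pair $x<y$ (else $\pat{C\cup\{x\}}{C\cup\{y\}}=+-$), hence has size $\le1$, and summing gives $\delta(n,k,1)\le\frac{1}{n-k+1}\le\frac{1}{k+1}$, which is at most $a_1k^{-c_1}$.

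For the inductive step, assume the statement for $d-1$, and again (by the symmetries) write $P=L\,(+-)\,R$ with $|L|=a$, so that $Q:=LR$ is $(d-1)$-balanced. The structural observation is: if $A,B\in\A$ with $\pat A B=P$ and $x\in A\setminus B$, $y\in B\setminus A$ are the elements of $A\triangle B$ in positions $a+1$ and $a+2$, then $x<y$, the $a$ smallest elements of $A\triangle B$ lie below $x$ and its $2d-2-a$ largest lie above $y$, and $A^{\circ}:=A\setminus\{x\}$, $B^{\circ}:=B\setminus\{y\}$ satisfy $\pat{A^{\circ}}{B^{\circ}}=Q$; conversely any $Q$-pattern in $\A$'s link structure, together with a choice of $x<y$ in the ``gap'' between its $L$- and $R$-parts and outside $A^\circ\cup B^\circ$, with $A^{\circ}\cup\{x\},B^{\circ}\cup\{y\}\in\A$, yields a $P$-pattern. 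Hence $P$-freeness of $\A$ is equivalent to: for every common part $C$, every $Q$-pattern realised in $\mathcal L_C$ with gap interval $I$, the two sides of ``fillings'' in $I$ must be non-interleaving in exactly the one-sided way forced by the $d=1$ argument. The quantitative heart is to turn this into a genuine density \emph{decrement}: split the data into the part controlled by the $(d-1)$-density bound (the inductive hypothesis caps the number of available $Q$-patterns in the links at a $\lesssim a_{d-1}(k-1)^{-c_{d-1}}$ fraction of the trivial count) and the part controlled by the $d=1$ antichain bound (inside each gap $I$ only a $\lesssim1/|I|$ fraction of potential fillings can be used on each side); feeding a threshold on the gap length into both estimates and double counting over the $O(d)$ possible descent positions gives $\delta(n,k,P)\le a_dk^{-c_d}$ with $c_d$ a fixed fraction (about $\tfrac18$) of $c_{d-1}$ and $a_d$ larger than $a_{d-1}$ by a $\mathrm{poly}(d)$ factor, which is where $(8d)^{5d}$ and $6d\,8^{-d}$ come from.

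The main obstacle is precisely this last step. The naive use of the link reduction alone yields only the trivial monotonicity $\delta(n,k,P)\le\delta(n-t,k-t,P)$ and therefore loses all $k$-dependence (indeed $\delta(N,d,P)$ need not tend to $0$ for fixed $d$), so one must genuinely interleave the $(d-1)$-dimensional density bound with the $1$-dimensional gap estimate while averaging over both the descent position and the gap length. Managing the three interacting parameters — the size of the common part $C$, the uniformity, and the gap length — without letting the $k$-dependence leak away is the delicate part, and it is this balancing that is responsible for the factor $8^{-d}$ in the exponent.
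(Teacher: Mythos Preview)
Your proposal is not a proof: you explicitly flag the ``quantitative heart'' as the main obstacle and then do not carry it out. More importantly, the particular decomposition you chose makes that step genuinely harder than you suggest, and it is not the route the paper takes.

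When you write $P=L(+-)R$ and pass to $Q=LR$, the removed descent sits in the \emph{interior} of the symmetric difference, so the $L$-part of any $Q$-pattern must lie to the left of $x$ and the $R$-part to the right of $y$. Thus after fixing a pair $i<j$ and passing to $\A_{i,j}=\{D:D\cup\{i\},D\cup\{j\}\in\A\}$, the forbidden $Q$-configurations live on $[1,i-1]\cup[j+1,n]$ (and only among pairs $D_1,D_2$ that agree on the gap $[i+1,j-1]$), while your ``$d=1$ antichain bound'' concerns the gap itself. These two constraints sit on disjoint pieces of the ground set and involve different pairs of $(k-1)$-sets with different neighbourhoods; your sketch gives no mechanism for combining them into a recursion of the shape $\delta(k,d)\le C\,\delta(k',d-1)^{\theta}$ with $k'$ a positive power of $k$. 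In particular, ``feeding a threshold on the gap length into both estimates and double counting over the $O(d)$ descent positions'' is not an argument: the $(d-1)$-bound does not see the gap length at all in your setup.

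The paper avoids this by a case split on the \emph{endpoints} of $P$. If $P_1\neq P_{2d}$, it removes the first and last symbols, so the reduced pattern $Q=P_2\cdots P_{2d-1}$ lives \emph{entirely inside} the gap $(i,j)$; averaging then produces a genuinely $Q$-free family on an interval of length $\Theta(\alpha^2 k)$, yielding $\delta(k,P)\le\max\bigl(\gamma,\,6\sqrt{\delta(\lceil\gamma^2 k/64\rceil,d-1)}\bigr)$. If $P_1=P_{2d}$, removing endpoints would unbalance $P$, so instead an intermediate-value argument on the partial sums splits $P=Q_1Q_2$ into two shorter balanced patterns, and a two-interval decomposition of $[2k]$ gives $\delta(k,P)\le\max\bigl(2e^{-k_1/12},\,4\delta(k_1,d_1),\,4(3k_1)^{2d_1}\delta(k_2,d_2)\bigr)$. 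Both recursions are then closed with the stated $a_d,c_d$. The point is that in each case the reduced pattern sits cleanly on one region, so the inductive hypothesis applies directly; your interior-descent move destroys exactly this locality.
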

By our discussion above for fixed $k$ we see that Theorem \ref{thm: general pattern theorem} is in a sense a `high-dimensional' result. Also note that Theorem \ref{thm: general pattern theorem} shows there is a constant $c>0$ with the property that if $P$ is a $d$-balanced pattern with $d \leq c \log \log n$ and $\A \subset {\cal P}[n]$ which is $P$-free, then $|{\cal A}| = o(2^n)$.

Let $\inte d$ denote the $d$-balanced pattern consisting of $d$ plus signs, followed by $d$ minus signs. We refer to these as \emph{interval patterns}. Given the obstruction of $\inte 2$  above, it is natural to ask for bounds on $f(n,k,\inte d)$. 
\begin{thm}
	\label{thm: interval pattern theorem}
	Given $d,k,n \in {\mathbb N}$ with $2k \leq n$ we have 
	$$\delta (n,k,\inte d) = O (d^2k^{-1} ).$$
\end{thm}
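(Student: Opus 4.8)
The plan is to bound $f(n,k,\inte d)$ by an averaging/compression argument over suitable sub-structures of $\binom{[n]}{k}$ where the forbidden interval pattern $\inte d$ becomes a very concrete combinatorial obstruction. The key observation is that $\pat A B = \inte d$ means: there are positions $j_1 < \cdots < j_d < j_{d+1} < \cdots < j_{2d}$ with $A\triangle B = \{j_1,\dots,j_{2d}\}$, the first $d$ of these lying in $A\setminus B$ and the last $d$ lying in $B\setminus A$. In other words, $A$ is obtained from $B$ by deleting $d$ elements that all precede the $d$ elements that are added. Equivalently, writing $C = A\cap B$, we have $A = C \cup X$ and $B = C\cup Y$ where $|X| = |Y| = d$ and $\max X < \min Y$ — but more usefully, every element of $X$ is smaller than every element of $Y$, and (crucially) there is \emph{no} element of $A\triangle B$ strictly between them that we've miscounted; since $A\triangle B = X\cup Y$ exactly, this is automatic. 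So $\inte d$-freeness of $\A$ says precisely: there do not exist $C$ and disjoint $d$-sets $X,Y$ disjoint from $C$ with $\max X < \min Y$ and both $C\cup X, C\cup Y \in \A$.

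The main step is to exploit this via a one-dimensional projection. Fix the ground set order and, for a uniformly random "threshold-respecting" chain or — more simply — consider the following: for each $(k-d)$-set $C$ and each way of splitting $[n]\setminus C$ at a point, count pairs. Concretely I would restrict attention to the link of a random $(k-d)$-subset $C$: the sets $A \in \A$ with $C \subset A$ and $|A\setminus C| = d$ correspond to $d$-subsets of $[n]\setminus C$, and $\inte d$-freeness forbids two such $d$-sets $S, T$ with $\max S < \min T$. But a family of $d$-subsets of a linearly ordered set with no pair $S, T$ satisfying $\max S < \min T$ is exactly an \emph{intersecting-in-range} family: viewing each $d$-set by its interval $[\min S, \max S]$, we need these intervals to pairwise intersect, OR (if disjoint) to not be in the forbidden order — but disjoint intervals are always comparably ordered, so in fact the intervals $[\min S,\max S]$ must pairwise intersect. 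By the Helly property on the line, pairwise-intersecting intervals share a common point $p$, so every such $S$ satisfies $\min S \le p \le \max S$. Counting $d$-subsets of an $m$-set ($m \approx n - k + d$) that straddle a fixed point $p$: choosing $p$'s location, such sets number at most roughly $\binom{m}{d} \cdot O(d^2/m)$ after optimizing, since we must place at least one element in each of the two short... actually the straddling condition forces elements on both sides of $p$, giving a factor $O(d^2/m)$ loss versus all of $\binom{m}{d}$. This is the density bound $O(d^2 k^{-1})$ we want, after checking $m = \Theta(n) = \Omega(k)$.

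The remaining work is the standard averaging: $\sum_{C} |\{A\in\A: C\subset A, |A|=k\}| = |\A|\binom{k}{k-d}$ when summing over $(k-d)$-sets $C$, and each inner count is at most the link bound above times $\binom{n-k+d}{d}$, so $|\A| \le O(d^2/k)\binom{n}{k}$ after simplifying the binomial ratios $\binom{n-k+d}{d}\binom{n}{k-d}/\binom{k}{k-d}\cdot\binom{n}{k}^{-1}$ — this is where $2k\le n$ is used to keep all the relevant ranges of size $\Theta(n)$. The main obstacle I anticipate is not the Helly step but getting the straddling count clean: one must be careful that the forbidden configuration is "$\max S < \min T$" strictly and handle the boundary case $\max S = \min T$ (impossible, as $S, T$ would share that element, but then they're not giving an interval pattern of order $2d$ anyway since $|A\triangle B| < 2d$) — so in fact the intervals need only pairwise intersect or share an endpoint, which is the same as pairwise intersect. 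A secondary subtlety: the link family might also contain $d$-sets whose associated "interval" degenerates, but these are handled identically. I expect the whole argument to be short once the reduction to Helly on the line is in place.
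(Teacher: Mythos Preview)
Your reduction to the link of a $(k-d)$-set $C$ is correct: the link $\mathcal{L}_C \subset \binom{[n]\setminus C}{d}$ contains no two members $S, T$ with $\max S < \min T$, and hence by Helly on the line all the intervals $[\min S, \max S]$ share a common point $p$. The gap is the next step: the number of $d$-subsets of an $m$-element chain straddling a fixed point $p$ is \emph{not} $O(d^2/m)\binom{m}{d}$. If $p$ sits at the midpoint, a uniformly random $d$-set fails to straddle $p$ only when it lies entirely in $[1,\lfloor p\rfloor]$ or entirely in $[\lceil p\rceil,m]$, each event having probability about $2^{-d}$; so the density of straddling sets is $1 - 2^{1-d} + o(1)$, hence at least $1/2$ for every $d\geq 2$. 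Moreover, the family of \emph{all} $d$-sets straddling the midpoint is itself a legitimate link (it genuinely contains no pair with $\max S < \min T$), so no sharper link-by-link bound is available. Your averaging identity then collapses (after the binomial simplification you indicate) to $|\mathcal{A}| \leq \rho\binom{n}{k}$ with $\rho$ the worst link density, and $\rho = \Omega(1)$ gives only the trivial bound. The approach does work for $d=1$, where straddling forces $S=\{p\}$, but breaks for all $d\geq 2$.

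The paper's argument avoids this by not bounding links individually. It partitions $[n]$ into $m\approx n/(8d^2)$ consecutive intervals $I_1,\ldots,I_m$ of length $\approx 8d^2$, picks a random $(n/2-d)$-set $T$, and within each interval $I_i$ with $|I_i\setminus T|\geq d$ picks a random $d$-set $S_i\subset I_i\setminus T$, setting $A_i=T\cup S_i$. For $i<j$ one has $\pat{A_i}{A_j}=\inte{d}$, so at most one $A_i$ can lie in $\mathcal{A}$; an expectation calculation then turns this into $|\mathcal{A}|=O(m^{-1})\binom{n}{n/2}=O(d^2/n)\binom{n}{n/2}$. The $d^2/k$ factor thus comes from the number of intervals, a global quantity, rather than from any single link.
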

In particular, families $\A \subset {\cal P}[n]$ which are $\inte d$-free 
for $d \ll \sqrt n$ satisfy $|\A | = o(2^n)$. Furthermore, this turns out to be tight -- if $d \geq c\sqrt n$ then there are $\inte d$-free families $\A \subset {\cal P}[n]$ with $|\A | = \Omega _c(2^n)$.

Lastly, we consider the $d$-balanced pattern $\alt d$ consisting of alternating plus and minus signs, e.g. $\alt 2 = +-+-$. We refer to these as \emph{alternating patterns}. Our next result proves a density result for such patterns. 
\begin{thm}
	\label{thm: alternating pattern theorem}
	Given $d,k,n \in {\mathbb N}$ with $2k \leq n$ we have 
	$$\delta (n,k,\alt d) = 
	O \Big (\log ^{-1}\Big (\frac {k}{d^2} \Big ) \Big ).$$
\end{thm}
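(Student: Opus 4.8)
The plan is to combine a structural reformulation of alternating patterns with the single‑block case $d=1$ and then amplify across a block decomposition of $[n]$; the delicate point is controlling the density loss in the amplification, which is what pins down the $\log(k/d^2)$.

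\emph{Reformulation.} Identifying a $k$‑set $A\subseteq[n]$ with its lattice path $W_A$ (step $+1$ on $A$, $-1$ off $A$), one checks that, up to swapping $A$ and $B$, $\pat A B=\alt d$ holds exactly when $W_A-W_B$ stays in $\{0,2\}$ and equals $2$ on precisely $d$ runs — equivalently, $A$ and $B$ are comparable in the dominance order with all prefix‑sum differences $\le 1$ and $|A\triangle B|=2d$, equivalently $B$ is obtained from $A$ by pushing $d$ of its elements to the right through pairwise disjoint, linearly ordered intervals. The key consequence is \emph{block multiplicativity}: for any partition of $[n]$ into consecutive blocks $I_1<\dots<I_m$, one has $\pat A B=\alt d$ iff the restrictions of $(A,B)$ to the $I_i$ realise alternating patterns $\alt{d_i}$ with $\sum_i d_i=d$; in particular, if $A,B\in\A$ agree off $d$ of the blocks and realise $\alt 1=+-$ inside each of those $d$ blocks, then $\pat A B=\alt d$. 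Since $f(n,k,\cdot)=f(n,n-k,\cdot)$ we may take $k\le n/2$, and passing to a heaviest profile class (at no loss of density) we may assume every $A\in\A$ meets $I_i$ in a fixed number $a_i\le|I_i|/2$ of points, so that $\A$ lives in $\prod_i\binom{I_i}{a_i}$.

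\emph{Base case.} An $\alt 1$‑free family in $\binom{[s]}{a}$ (equivalently, a family with no two sets at Hamming distance $2$) has density $O\big(1/\max(a,s-a)\big)$, by injectivity of the $(a-1)$‑shadow on such a family, or by Theorem~\ref{thm: interval pattern theorem} with $d=1$ (noting $\inte 1=\alt 1$). Hence inside a block $\binom{[s]}{a}$ any subfamily of density exceeding a constant multiple of $1/a$ contains a $+-$ pair, and a short supersaturation argument upgrades this: a subfamily of density $\gamma\ge 4c_0/a$ in fact spans $\Omega(\gamma)\binom{s}{a}$ such pairs.

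\emph{Amplification.} Given $\A$ of density $\delta$ in $\prod_i\binom{I_i}{a_i}$, one selects a block, finds a $+-$ pair $C\prec C'$ inside it for which the two slices of $\A$ obtained by freezing that block's coordinate to $C$ and to $C'$ overlap in a large subfamily, and passes to that overlap — a subfamily of $\prod_{j\ne i}\binom{I_j}{a_j}$ that has "recorded" one of the $d$ required $+-$ moves and whose members, together with the frozen pair, still sit inside $\A$. Iterating $d$ times yields $A,B\in\A$ realising $+-$ in $d$ blocks and agreeing elsewhere, hence $\pat A B=\alt d$, a contradiction; the number of iterations one can afford is controlled by how small the selected blocks may be made while the base‑case threshold $c_0/a_i<\delta^{(i)}$ is still met and enough blocks survive, and this is what produces $\Theta(\log(k/d^2))$ rounds and the bound $\delta=O\big(1/\log(k/d^2)\big)$.

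\emph{Main obstacle.} The density accounting in the amplification is the crux. Selecting the $+-$ pair by averaging over all $\Theta(a_i|I_i|)$ such pairs in a block loses a polynomial factor per round, which is fatal once $d$ is large; instead one must choose the block sizes along the iteration carefully (smaller blocks cost less but demand a higher current density), select the frozen pair so that its two slices overlap in \emph{nearly} — rather than the square of — the current density, and verify that the contributions of all $\Theta(\log(k/d^2))$ rounds multiply out to only a constant. Making this bookkeeping work, and in particular explaining why $d^2$ (rather than $d$) is the right cutoff below which a block can no longer host the needed alternating structure together with enough surviving blocks, is the heart of the proof.
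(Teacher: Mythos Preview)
Your proposal is a plan, not a proof: you name the amplification step as the crux and then leave it unresolved. As written, the iterative scheme cannot reach the stated bound. To build $\alt d$ you must iterate exactly $d$ times, and each round --- fixing a $+-$ pair $(C,C')$ in a block and passing to the common slice $\{D:C\cup D,\ C'\cup D\in\A\}$ --- costs density. Averaging over the $\Theta(a_i|I_i|)$ candidate pairs gives a common slice of density only $\Omega(\delta/(a_i|I_i|))$, and your suggestion to ``select the frozen pair so that its two slices overlap in nearly the current density'' is exactly the missing idea; there is no mechanism in the proposal that achieves it, and I do not see one that avoids at least a constant-factor loss per round. Either way the total loss is exponential in $d$, not $O(1)$. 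Your last paragraph also conflates two quantities: the number of rounds has to be $d$, whereas $\Theta(\log(k/d^2))$ ought to be a block parameter --- it cannot be both. (A minor point: the ``iff'' in your block-multiplicativity claim is false, since an alternating symmetric difference can cut a block in an unbalanced segment such as $+-+$; only the direction you actually use is correct.)

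The paper avoids iteration altogether. It takes blocks of size $m=\lfloor\tfrac12\log_2(n/d^2)\rfloor$ (after reducing to $n=2k$) and looks only at those blocks $I_i$ with $|A\cap I_i|=1$; a Chernoff bound shows that almost every $A$ has $D\ge n/2^{m+1}\ge 2md^2$ such blocks. Freezing $A$ outside them turns the family into a subset of $[m]^D$, and two vectors that agree except on $d$ coordinates, with one coordinatewise dominating the other, give an $\alt d$ pair. The key lemma then bounds the density of a subset of $[m]^D$ with no $d$-dominating pair by $2/m$ via a \emph{one-shot} combinatorial-line argument: pick a random $d$-subset $S$ of coordinates and a random assignment off $S$, and note that the resulting line $\{{\bf z}_S(1),\dots,{\bf z}_S(m)\}$ meets the family in at most one point. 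There is no per-round loss; the $\log(k/d^2)$ is simply the block size $m$, and the $d^2$ enters through the hypothesis $D\ge 2md^2$ needed in the line-count estimate.
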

Thus again, all families $\A \subset {\cal P}[n]$ which are $\alt d$-free for $d \ll \sqrt n$ satisfy $|\A | = o(2^n)$. 
Unlike in the case of the interval patterns, we do not know if this is tight. 

Before closing the introduction, we mention some further results related to this topic. A family $\A \subset {\cal P}[n]$ is said to be a \emph{tilted Sperner family} if for all distinct $A,B \in {\cal A}$ we have $|B \setminus A| \neq 2|A \setminus B|$. 
Equivalently, ${\cal A}$ is $P$-free for all patterns $P$ with 
$|S_-(P)| = 2|S_+(P)|$. Kalai raised the question of how large a tilted Sperner family ${\cal A} \subset {\cal P}[n]$ can be. In \cite{Leader-Long}, Leader and the second author proved that such families satisfy $|\A | \leq (1+ o(1)) \binom {n}{n/2}$, which is asymptotically optimal. For sufficiently large $n$, the extremal families were also determined. In \cite{Long}, the second author proved that this bound almost still applies if we only forbid `tilted pairs' $A,B$ with a single pattern. It was shown that if ${\cal A} \subset {\cal P}[n]$ does not contain $A,B \in {\cal P}[n]$ with $|B \setminus A| \neq 2|A\setminus B|$ for all $A,B \in \A $ and satisfying $a < b$ for all $a\in A\setminus B$ and $b\in B \setminus A$ then $|{\cal A}| \leq C^{\sqrt {\log n}} \binom {n}{n/2}$, for some constant $C> 0$. This condition is equivalent to ${\cal A}$ being $P(d)$-free for all patterns $P(d)$ consisting of $d$ $+$ signs followed by $2d$ $-$ signs. This bound was recently improved by Gerbner and Vizer in \cite{Gerbner-Vizer}. They proved that such families satisfy $|{\cal A}| \leq C \sqrt{\log n } \binom {n}{n/2}$. No family is known for this problem with order more than $C \binom {n}{n/2}$.

Lastly, we mention a fascinating question raised by Johnson and Talbot \cite{Johnson-Talbot} related to Theorem \ref{thm: general pattern theorem} 
(similar conjectures have been raised by Bollob\'as, Leader and Malvenuto \cite{Bollobas-Leader-Malvenuto}, and Bukh \cite{Bukh}). Our phrasing slightly differs from that in \cite{Johnson-Talbot}.
\begin{question*}[Johnson--Talbot]
Is it true that for any $k \in {\mathbb N}$ and $\alpha > 0$ there is $n_0(k,\alpha )\in {\mathbb N}$ with the following property. Suppose that $n \geq n_0(k, \alpha)$ and that ${\cal A} \subset \binom {[n]}{n/2}$ with $|{\cal A}| \geq \alpha \binom {n}{n/2}$. Then there are disjoint sets $S \in \binom {[n]}{n/2 -\lfloor k/2 \rfloor }$ and $T \in \binom {[n]\setminus S}{k}$ such that the family ${\cal C}_{T,S} := \big \{S \cup U: U \in \binom {T}{\lfloor k/2 \rfloor } \big \}$ 
is contained in ${\cal A}$.
\end{question*}
This is true for $k=3$, but is already open for $k=4$. In this case it is possible to guarantee that $|{\cal C}_{T,S} \cap {\cal A}| \geq 5$ for some $T,S$ (note $|{\cal C}_{T,S}| = 6$ for $k=4$). More generally, Johnson and Talbot \cite{Johnson-Talbot} proved that under the hypothesis above, $|{\cal C}_{T,S} \cap {\cal A}| \geq 4\cdot 3^{{(k-4)}/{3}}$ for some $T,S$. We note the conclusion that dense subsets of ${\cal P}[n]$ contain all small patterns ({from Theorem \ref{thm: general pattern theorem}}) would immediately follow from a positive answer to this question. Indeed, for $k= 2d$ any set ${\cal C}_{T,S}$ contains every $d$-balanced pattern. Theorem \ref{thm: general pattern theorem} may be seen as giving (weak) evidence for the question: for $k=2d$ and any $d$-balanced pattern $P$, there is $T$ and $S$ and sets $A,B \in {\cal C}_{T,S} \cap {\cal A}$ with $\pat A B = P$.
\vspace{2mm}

\textbf{Notation:} Given a set $X$, we write ${\cal P}(X)$ for the power set of $X$ and $\binom {X}{k} = \{A \subset X: |A| = k\}$. Given integers $m,n \in {\mathbb N}$ with $m\leq n$, we let $[n] = \{1,\ldots ,n\}$ and $[m,n] = \{m,m+1,\ldots ,n\}$. We also write $(n)_m$ for the falling factorial $(n)_m = n(n-1)\cdots (n-m+1)$.

\section{Small balanced patterns}

In this section we prove Theorem \ref{thm: general pattern theorem}. We will find it convenient to prove many of our results restricted of the middle layer. We then simply write $f(k,P)$ for $f(2k,k,P)$, $\delta (k, P)$ for $\delta (2k,k,P)$, etc.. The following simple observation is useful to move results between different layers of the cube.

\begin{prop}
	\label{prop: moving to different levels}
	Let $n,m ,k,l\in {\mathbb N}$ with $m \leq n$ and $l \leq k$  
	and let $P$ be a pattern. Then $\delta (n,k,P) \leq 
	\delta (m,l,P)$.
\end{prop}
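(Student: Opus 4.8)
The plan is to derive the bound from two one-step estimates,
$$\delta(n,k,P) \le \delta(n-1,k,P) \qquad \text{and} \qquad \delta(n,k,P) \le \delta(n-1,k-1,P),$$
and then iterate: applying the second estimate $k-l$ times lowers the uniformity to $l$ (while dropping the ground set to size $n-(k-l)$), after which the first estimate shrinks the ground set down to $[m]$, yielding the desired chain of inequalities. Both estimates rest on one elementary observation about the pattern $\pat{A}{B}$: it is determined by the relative order of the elements of $A\triangle B$ together with which of them lie in $A\setminus B$. Hence (i) if $\phi$ is an order-preserving injection of the ground set then $\pat{\phi(A)}{\phi(B)}=\pat{A}{B}$, so $P$-freeness is preserved by order-preserving relabelling; and (ii) if $i\in A\cap B$ then $(A\setminus\{i\})\triangle(B\setminus\{i\})=A\triangle B$ with the same induced order, so $\pat{A\setminus\{i\}}{B\setminus\{i\}}=\pat{A}{B}$, i.e. $P$-freeness is inherited by links.

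For the second (link) estimate, let $\A\subseteq\binom{[n]}{k}$ be a $P$-free family of maximum size and, for $i\in[n]$, put $\A^{(i)}=\{A\setminus\{i\}:i\in A\in\A\}\subseteq\binom{[n]\setminus\{i\}}{k-1}$. By (ii) each $\A^{(i)}$ is $P$-free, and hence so is its image under the order-preserving bijection $[n]\setminus\{i\}\to[n-1]$. Double counting gives $\sum_{i\in[n]}|\A^{(i)}|=\sum_{A\in\A}|A|=k|\A|$, so some $i$ has $|\A^{(i)}|\ge k|\A|/n$. Since $\binom{n-1}{k-1}=\frac kn\binom nk$, the density of the relabelled $\A^{(i)}$ inside $\binom{[n-1]}{k-1}$ is at least $\frac{k|\A|/n}{(k/n)\binom nk}=|\A|/\binom nk=\delta(n,k,P)$, giving $\delta(n-1,k-1,P)\ge\delta(n,k,P)$. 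The first estimate is the same argument with $\A_i=\{A\in\A:i\notin A\}\subseteq\binom{[n]\setminus\{i\}}{k}$ in place of $\A^{(i)}$: each $\A_i$ is $P$-free as a subfamily of $\A$, one has $\sum_{i\in[n]}|\A_i|=(n-k)|\A|$, and $\binom{n-1}{k}=\frac{n-k}{n}\binom nk$, so again the best $\A_i$ has density at least $\delta(n,k,P)$ in $\binom{[n-1]}{k}$ after relabelling.

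I do not expect a genuine obstacle here: the substance is just observations (i) and (ii), which are immediate from the definition of $\pat{\cdot}{\cdot}$, together with the two binomial identities showing that the averaging step loses nothing in density. The only point worth care is that the link move decreases $n$ and $k$ in lockstep, so in the iteration one should first use it to bring the uniformity down to $l$ and only afterwards use the ground-set move; with the two moves combined in this way the passage $\delta(n,k,P)\le\cdots\le\delta(m,l,P)$ goes through.
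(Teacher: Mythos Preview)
Your proof is correct and rests on exactly the same idea as the paper's: patterns depend only on $A\triangle B$ and the induced order, so passing to a link family on a smaller ground set preserves $P$-freeness, and averaging shows no density is lost. The only difference is packaging---the paper carries this out in one shot by choosing random disjoint sets $T\in\binom{[n]}{m}$ and $U\in\binom{[n]\setminus T}{k-l}$ and looking at $\{A\in\binom{T}{l}:A\cup U\in\A\}$, whereas you iterate one-step link and deletion moves; both arguments implicitly require $m+(k-l)\le n$ so that the restriction is possible.
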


\begin{proof}
	Suppose ${\cal A} \subset \binom {[n]}{k}$ is $P$-free 
	with $|{\cal A}| = \delta (n,k,P)\binom {n}{k}$. Select two 
	disjoint sets $T$ and $U$ of order $m$ and $k-l$ uniformly at random. 
	Then let ${\cal A}_{T,U} = \{A \in \binom {T}{l}: A \cup U \in \A \}$.
	As $\A $ is $P$-free, the set ${\cal A}_{T,U}$ must also be $P$-free 
	for all $T,U$, giving $|{\cal A}_{T,U}| \leq \delta (m,l,P) \binom {m}{l}$. 
	However, ${\mathbb E}_{T,U} |{\cal A}_{T,U}| 
	= \delta (n,k,P)\binom {m}{l}$. The result follows.
\end{proof}

Our next two lemmas are the main steps in the proof of Theorem \ref{thm: general pattern theorem}. Combined they will allow a recursive bound for $\delta (k,d)$ based on bounds on $\delta (k',d')$ for $k' < k$ and $d' < d$. 
	
	\begin{lem} 
		\label{lem: general patterns, differing signs}
	Let $d,k \in {\mathbb N}$ and let $P$ be a 
	$d$-balanced pattern with $P_1 \neq P_{2d}$. Then 
	for any $\gamma \in [\frac {16 \log k}{k^{1/2}} ,1]$ we have 	$$\delta (k,P) \leq \max \Big ( \gamma , 
	6\sqrt { \delta \big ( \lceil {\gamma ^2k}/{64} \rceil,d-1 \big)} \Big ).$$
	\end{lem}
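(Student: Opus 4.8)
We want to show that a $P$-free family $\A \subset \binom{[2k]}{k}$ with $P$ a $d$-balanced pattern satisfying $P_1 \neq P_{2d}$ must have density at most $\max(\gamma, 6\sqrt{\delta(\lceil \gamma^2 k/64\rceil, d-1)})$. So suppose $\delta(k,P) > \gamma$; we need to produce, inside some small sub-cube, a dense family that is $P'$-free for the $(d-1)$-balanced pattern $P'$ obtained by deleting the first and last coordinates of $P$ (this is where $P_1 \neq P_{2d}$ matters: after deletion both signs still appear, and in fact $P'$ is $(d-1)$-balanced). The natural mechanism: split $[2k]$ into a short "left block" $L$ and a short "right block" $R$ near the two ends, with a large "middle block" $M$ in between. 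If $A, B$ agree on $L \cup R$ but $\pat{A\cap M}{B\cap M} = P'$, and if additionally $A$ has an element of $L$ that $B$ lacks (contributing the leading $+$, say) and $B$ has an element of $R$ that $A$ lacks (contributing the trailing $-$), then $\pat A B = P$. So $P$-freeness of $\A$ forces the "trace" on $M$ to be $P'$-free after an appropriate restriction.

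<br>

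**Key steps in order.** First I would choose the block sizes: $|L| = |R| = \lceil \gamma k/4\rceil$ or so, leaving $|M| = k - 2|L|$, which is $\Theta(k)$. Second, for a uniformly random partition of $[2k]$ into these three consecutive-interval blocks (or rather a random labeling consistent with the pattern-relevant ordering — one must be careful that the linear order is respected, so the blocks should genuinely be initial/middle/final segments of $[2k]$), and for each fixed pair $(A_L, A_R, M\text{-part})$ of the cube, restrict $\A$. Third — the crux — a double counting / averaging argument: since $|\A| > \gamma \binom{2k}{k}$, an averaging over the choice of which $|M|/2$ elements lie in $A \cap M$, combined with a union bound over the "bad" configurations where $A$ and $B$ cannot be completed to a $P$-pattern (e.g. $A \cap L = B \cap L$ on the relevant coordinate, or the leftmost symmetric-difference element lies in $L$ but on the wrong side), shows that on a positive fraction of sub-cubes indexed by $(A\cap L, A\cap R)$ the induced family on $M$ has density $> \gamma/2$ or so — and crucially is forced to avoid $P'$ once we also fix that some element of $L\setminus$ (the common part) is in one side and some element of $R$ is in the other. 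The factor $6$ and the square roots strongly suggest a Cauchy–Schwarz step: one bounds $|\A|^2$ (or the number of pairs $A,B$ with a common trace on $L\cup R$) from below by convexity, then uses $P'$-freeness of each trace to bound this by $\delta(|M|/2\text{-ish}, d-1)\binom{|M|}{|M|/2}^{\text{something}}$, and takes a root. The constant $64 = (8)^2$ and $\gamma^2/64$ point to $|M|/2 \approx \gamma^2 k/128$ after two halvings (one from $|L|\sim \gamma k/4$ eating into $M$, one from passing to the middle layer of $M$), matching $\lceil \gamma^2 k/64\rceil$ up to the ceiling and constant slack.

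<br>

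**The main obstacle.** The delicate point is handling the \emph{order constraint} correctly. Forming a $P$-pattern requires the symmetric difference, listed in increasing order, to realize exactly $P$; it is not enough that $A\cap M$ realizes $P'$ on $M$ and that $A$ beats $B$ somewhere in $L$ and loses somewhere in $R$ — we need the $L$-witness to be the \emph{globally first} symmetric-difference element and the $R$-witness to be the \emph{globally last}. This forces $A$ and $B$ to agree below the $L$-witness and above the $R$-witness, so one cannot simply take $L, R$ to be the extreme $\Theta(\gamma k)$ coordinates; one must instead argue that for a \emph{random} choice of $L$ as an initial segment of random length $\le \gamma k$ (and $R$ symmetrically a random final segment), with probability bounded below in $\gamma$ the first element of $A \triangle B$ falls in $L$ and lies in $A\setminus B$ (or we've conditioned on $P_1 = +$), and dually for $R$ — this is a "first-difference lands early" estimate and is exactly where the hypothesis $\gamma \ge 16\log k / \sqrt k$ enters, presumably via a concentration bound ensuring $|A \triangle B| \gtrsim$ some threshold for almost all pairs, so that the first difference is not stuck past coordinate $\gamma k$. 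I would isolate this as a probabilistic lemma, prove it first, and then the averaging/Cauchy–Schwarz skeleton above becomes routine.
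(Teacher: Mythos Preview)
Your reduction target is correct (delete $P_1$ and $P_{2d}$ to obtain a $(d-1)$-balanced pattern $Q$, and pass to a $Q$-free family on a sub-interval), and you rightly flag the order constraint as the crux. But the mechanism you propose cannot close the gap. Once you fix the traces $A\cap L$ and $A\cap R$ and look at the induced family on $M$, any two members $A,B$ with those common traces satisfy $A\triangle B\subset M$, so $\pat{A}{B}$ has length at most $2(d-1)$ and is never equal to $P$; there is simply no contradiction to extract from $Q$-freeness on $M$. To manufacture a genuine $P$-pair you need $A$ and $B$ to \emph{differ} at exactly one coordinate $i$ to the left of the $Q$-pattern and one coordinate $j$ to the right, while agreeing everywhere outside $[i,j]$. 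Your ``first difference lands in $L$'' heuristic does not produce pairs with this rigid structure, and in any case the relevant restriction would depend on the pair $(A,B)$ rather than on a single fixed trace, so it does not feed into an averaging or Cauchy--Schwarz step.

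The paper's device is different and bypasses the obstacle entirely. One first counts pairs $A,B\in\mathcal A$ with $|A\triangle B|=2$; convexity gives at least $\tfrac{\alpha^2 k^2}{2}\binom{2k-2}{k-1}$ of them. Writing such a pair as $C\cup\{i\}$, $C\cup\{j\}$ and setting $\mathcal A_{i,j}=\{C:\ C\cup\{i\},\,C\cup\{j\}\in\mathcal A\}$, a pigeonhole yields some $\{i,j\}$ with density $\alpha_{i,j}\ge\alpha^2/8$ \emph{and} gap $j-i\ge(\alpha^2/16)\cdot 2k$. Now take $X=(i,j)$, average to fix the part $U\subset[2k]\setminus[i,j]$, and obtain a dense $\mathcal C\subset\binom{X}{\ell}$. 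The key is that for any $C_1,C_2\in\mathcal C$ all four sets $C_a\cup U\cup\{h\}$ ($a\in\{1,2\}$, $h\in\{i,j\}$) lie in $\mathcal A$, so one may \emph{cross}: compare $C_1\cup U\cup\{i\}$ with $C_2\cup U\cup\{j\}$. Their symmetric difference is $\{i\}\cup(C_1\triangle C_2)\cup\{j\}$ with $i$ strictly below and $j$ strictly above all of $C_1\triangle C_2\subset X$, so the order constraint holds automatically and $\pat{C_1}{C_2}=Q$ forces a $P$-pair in $\mathcal A$. Thus $\mathcal C$ is $Q$-free, giving $\alpha^2/32\le\delta(|X|,\ell,d-1)$. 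The $\gamma^2 k/64$ is the size of the interval $X$ (from $j-i\gtrsim\alpha^2 k$), not a block length, and the square root comes from $\alpha^2\le 32\,\delta(\cdot,d-1)$ rather than from Cauchy--Schwarz on $|\mathcal A|^2$; the lower bound $\gamma\ge 16\log k/\sqrt k$ is used only in a Chernoff step to pin $\ell$ near $|X|/2$.
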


	\begin{proof}
	Let $\gamma $ be chosen as above and suppose that 
	${\mathcal A} \subset \binom {[2k]}{k}$ is $P$-free with 
	$|{\mathcal A}| = \alpha \binom {2k}{k}$. If $\alpha 
	\leq \gamma $ then there is nothing to prove, 
	so we will assume that $\alpha 
	> \gamma \geq \frac {16\log k}{k^{1/2}}$. We will first show that there 
	are many pairs $A,B \in {\mathcal A}$ with $|A \triangle B| = 2$. 
	Indeed, given $C \in \binom {[2k]}{k +1}$ let $y_C$ denote the number of 
	$A\in \A$ with $A \subset C$. Then we have
		\begin{equation*}
			\label{equation: y_C sum bound}
			\sum _{C \in \binom {[2k]}{k +1}} y_C 
				= 
			\big | \big \{(A,C) \in {\mathcal A} \times 
			\binom {[2k]}{k +1}: A \subset C \big \} \big |
				=
			|{\mathcal A}| k 
				\geq 
			\alpha  k \binom {2k}{k+1}.
		\end{equation*} 
	As for every pair $A,B \in {\mathcal A}$ with $|A \triangle B| = 2$ 
	there is a unique set $C \in \binom {[2k]}{k +1}$ with $A,B \subset C$, 
	we obtain 
		\begin{align*}
			\big | \big \{(A,B) \in {\mathcal A}^{(2)}: |A \triangle B| =2
			\big \} \big | 
					& = 
			\sum _{C \in {\binom {[2k]}{k +1}}} \binom {y_C}{2}
			 		\geq 
			\binom {2k}{k +1 } \binom {\alpha k}{2} \\ 
					&  \geq   
			\frac {\alpha ^2 k^2}{4} \times \frac{2k(2k-1)}{(k+1)k}\binom {2k-2}{k -1}
					\geq 
			\frac {\alpha ^2 k^2}{2}\binom {2k-2}{k -1}.
			 \numberthis \label{equation: double counting to get set pair count}
		\end{align*}
	The first inequality holds by the convexity of $\binom {x}{2}$ and the 
	second since $\alpha k -1 \geq \alpha k /2$ as $\alpha \geq 2/ k$.
	
	Now given $1\leq i < j \leq 2k$, let $\A _{i,j}:= 
	\{A \in [2k] \setminus \{i,j\}: A \cup \{i\}, A \cup \{j\} \in \A \}$. 
	Note that from \eqref{equation: double counting to get set pair count} 
	we have 
		\begin{equation}
			\label{equation: A_ij control}
			\sum _{i<j} |\A _{i,j}|
				= 
			\big | \big \{(A,B) \in {\mathcal A}^{(2)}: |A \triangle B| =2 \big \}\big | 
			\geq \frac {\alpha ^2 k^2}{2}\binom {2k-2}{k -1}. 
		\end{equation}
	Also let $\alpha _{i,j}$ and ${\beta }_{i,j}$ be defined so that 
	$|\A _{i,j}| = \alpha _{i,j} 
	\binom {2k-2}{k-1}$ and $\beta _{i,j} = (j-i)/2k$. 
	By \eqref{equation: A_ij control} 
	we find $\{i,j\}$ with $\alpha _{i,j} 
	\geq \frac{\alpha ^2 }{8}$ and $\beta _{i,j} \geq \frac{\alpha ^2}{16}$. 
	Indeed, we have
		\begin{align*}
			\sum _{\substack{\{i,j\} : \alpha _{i,j}< \frac{\alpha ^2 }{8}}} 
				|\A _{i,j}|
				+
			\sum _{\substack{\{i,j\}: \beta _{i,j} < \frac{\alpha ^2}{16}}} 
				|\A _{i,j}| 
				& < 
			\binom {2k}{2} \frac{\alpha ^2}{8} \binom {2k-2}{k -1}
				+
			2k \times \frac {\alpha ^2}{16} 2k
			\binom {2k-2}{k -1}
				\leq 
			\frac {\alpha ^2 k^2}{2} \binom {2k-2}{k -1}.
		\end{align*}
	Combined with \eqref{equation: A_ij control} we see that a claimed pair 
	$\{i,j\}$ exists. Fix such a pair $\{i,j\}$ and set 
	${\cal B} = {\cal A }_{i,j}$.
	
	Now let $X = [i+1,j-1]$ and $Y = [n] \setminus 
	[i,j]$ so that ${\cal B} \subset \binom {X \cup Y}{k-1}$. Partition elements 
	from $\binom {X \cup Y}{k-1}$ according to how they intersect $X$,  
	for each $\ell \in [0,j-i-2]$ letting
		\begin{equation*}
			X_{\ell } 
					= 
			\Big \{ A \in \binom {X \cup Y}{k-1}: 
			\big |A \cap X \big | = \ell \Big \}.
		\end{equation*} 
	Also let ${\cal B}_{\ell } = 
	{\cal B} \cap X_{\ell }$ and
	$L = \Big \{\ell : \big |\ell - \frac {|X|}{2} \big | 
	\leq \sqrt {|X|\log \big (\frac{8}{\alpha }\big )} \Big \}$. 
	By Chernoff's inequality we have 
	$$\sum _{\ell \notin L} |X_{\ell }| \leq 
	\frac {\alpha ^2}{32} \binom {|X| + |Y|}{k-1}.$$
	Using that $|{\cal B}| = \alpha _{i,j} \binom {|X| +|Y|}{k-1} \geq 
	\frac {\alpha ^2}{16} \binom {|X| +|Y|}{k-1}$ this shows that   
		\begin{align*}
			\sum _{\ell \in L} |{\cal B}_{\ell } \big |
				& \geq 
			|{\cal B}| - \frac {\alpha ^2}{32}\binom {|X| + |Y|}{k-1}  \geq 
			\frac {\alpha ^2}{32} \binom {|X| + |Y|}{k -1}
				 \geq 
			\frac {\alpha ^2}{32}
			\sum _{\ell \in L} |X_{\ell }|.
		\end{align*}
	The last inequality here holds since the sets $X_{\ell }$ are 
	disjoint subsets of $\binom {X \cup Y}{k - 1}$. Thus for some $\ell \in L$ 
	we have $|{\cal B}_{\ell }| \geq \frac {\alpha ^2}{32} |X_{\ell }|$. By 
	averaging, we find a set $U \subset Y$ with 
	$|U| = k-\ell-1$ such that the family 
			${\cal C} = \big \{ C \in \binom {X}{\ell }: C \cup U 
			\in {\cal B}_{\ell } \big \}$
	satisfies $|{\cal C}| \geq \frac {\alpha ^2}{32} \binom {|X|}{\ell }$.
	
	To complete the proof, let $Q$ denote the pattern obtained from $P$ by 
	removing $P_1$ and $P_{2d}$, i.e. $Q = P_2\cdots P_{2d-1}$. 
	Note that as $P_1 \neq P_{2d}$ we see that $Q$ is $(d-1)$-balanced. We claim 
	that $\mathcal C$ is $Q$-free. Indeed, suppose $C_1,C_2 \in {\cal C}$ with 
	$\pat {C_1}{C_2} = Q$. Then 
	by definition of ${\cal C}$ and ${\cal B} = {\cal A}_{i,j}$ we have 
	$$\Big \{C_a \cup U \cup \{h\} : a\in \{1,2\}, h\in \{i,j\} \Big \} 
			\subset 
	{\mathcal A}.$$ 
	If $P_1 = +$ we find $\pat {C_1 \cup U\cup \{i\}} {C_2 \cup U\cup \{j\}} 
	= P$.
	If $P_1 = -$ we find $\pat {C_1 \cup U\cup \{j\}} {C_2 \cup U\cup \{i\}} = P$. 
	Thus ${\cal C}$ must be $Q$-free and  
	$$\frac {\alpha ^2}{32} \binom {|X|}{\ell } \leq |{\cal C}| 
	\leq \delta (|X|,\ell ,Q) \binom {|X|}{\ell }.$$ 
	Take $k' = \big \lfloor \frac{|X|}{2} - 
	\sqrt {{|X|} \log \big ( \frac {8}{\alpha } \big ) } 
	\big \rfloor $. A calculation 
	shows that $\frac {|X|}{4} \geq \sqrt {|X| \log \big ( \frac {8}{\alpha } 
	\big ) } + 2$ since $\alpha \geq \frac {16\log k}{k^{1/2}}$ and 
	$|X| +2= \beta _{i,j}2k \geq \frac{\alpha ^2k}{8}$. This gives
	$$ k' \geq \frac{|X|}{2} - 
	\sqrt {{|X|} \log \big ( \frac {8}{\alpha } \big ) } 
	-1
		\geq 
	\frac {|X|}{4} + 1 \geq \Big \lceil \frac {\beta _{i,j}k}{2} \Big \rceil
	\geq \Big \lceil \frac {\alpha ^{2}k}{64} \Big \rceil 
	\geq \Big \lceil \frac {\gamma ^{2}k}{64} \Big \rceil.$$
	
	Since $\ell \in L$ we have $k' \leq \ell \leq |X| - k'$. 
	Using Proposition \ref{prop: moving to different levels} 
	we find that $\frac {\alpha ^2}{32}\leq \delta (|X|,\ell ,d-1) \leq 
	\delta (2k',k',d-1) = \delta (k' ,d-1) \leq 
	\delta (\lceil \frac {\gamma ^{2}k}{64} \rceil ,d-1)$. 
	Rearranging this gives $\alpha \leq 6 \sqrt{ \delta (\lceil 
	\frac {\gamma ^2 k}{64} \rceil , d-1 )}$.
	\end{proof} 
	
	Our second lemma deals with the case where $P$ starts and ends with the same signs.
	
	\begin{lem} 
		\label{lem: general patterns, same signs}
	Let $d \in {\mathbb N}$ and let $P$ be a 
	$d$-balanced pattern with $P_1 = P_{2d}$. Then there are 
	$d_1,d_2 \geq 1$ with $d_1 + d_2 = d$ such that the following 
	holds. For every $k_1, k_2$ with $2k_1 + k_2 = k$ we have
	\begin{equation*}
				\delta (k, P) 
						\leq 
				\max 
					\Big ( 2e^{-k_1/12},
						4 \delta (k_1 ,d_1), 
						4(3k_1)^{2d_1} \delta (k_2,d_2)
				 	\Big ).
		\end{equation*}
	Similarly for every $k_1, k_2$ with $k_1 + 2k_2 = k$ we have
		\begin{equation*}
				\delta (k, P) 
						\leq 
				\max 
					\Big ( 2e^{-k_2/12},
						4 \delta (k_2 ,d_2), 
						4(3k_2)^{2d_2} \delta (k_1,d_1)
				 	\Big ).
		\end{equation*}
	\end{lem}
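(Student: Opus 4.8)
The plan is to split the ground set $[2k]$ into an initial block $I$ and a final block $J$, and to exploit that a $P$-pattern $\pat AB=P$ whose ``small'' coordinates of $A\triangle B$ all lie in $I$ and whose ``large'' ones all lie in $J$ is exactly the concatenation of a $P^{(1)}$-pattern inside $I$ with a $P^{(2)}$-pattern inside $J$, for a suitable splitting $P=P^{(1)}P^{(2)}$ into two balanced halves. To obtain the splitting: after replacing $P$ by its sign-flipped pattern if necessary (harmless, since $\mathcal A\mapsto\{[2k]\setminus A:A\in\mathcal A\}$ preserves $P$-freeness up to flipping signs and $\delta(k,P)=\delta(k,\bar P)$) we may assume $P_1=P_{2d}=+$; then the $\pm1$ partial sums of $P$ are $+1$ after the first coordinate and $-1$ before the last, hence vanish at some even index $2d_1$ with $1\le d_1\le d-1$. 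Taking $P^{(1)}=P_1\cdots P_{2d_1}$, $P^{(2)}=P_{2d_1+1}\cdots P_{2d}$ and $d_2=d-d_1$ gives $d_j$-balanced patterns with $d_1,d_2\ge1$. I will prove both displayed inequalities for this $(d_1,d_2)$; they are proved identically, the first using an initial block of size $3k_1$ and the second a final block of size $3k_2$, so I describe only the first.

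Fix a $P$-free $\mathcal A\subseteq\binom{[2k]}{k}$ with $|\mathcal A|=\alpha\binom{2k}{k}$; we may assume $\alpha>2e^{-k_1/12}$ and $k_1,k_2\ge1$, as otherwise a term of the maximum is $\ge1$. Put $I=[3k_1]$, $J=[2k]\setminus I$, so $|I|=3k_1$ and $|J|=k_1+2k_2$, and let $\mathcal A'=\{A\in\mathcal A:k_1\le|A\cap I|\le 2k_1\}$. The range $[k_1,2k_1]$ is chosen precisely so that Proposition \ref{prop: moving to different levels} gives $\delta(|I|,\ell,P^{(1)})\le\delta(k_1,d_1)$ and $\delta(|J|,k-\ell,P^{(2)})\le\delta(k_2,d_2)$ for every $\ell$ in this range (one verifies the hypotheses $2k_1\le|I|$, $2k_2\le|J|$ and the inequalities among $\ell,k-\ell$ directly). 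Since $|A\cap I|$ is hypergeometric with mean $3k_1/2$, Hoeffding's inequality bounds the number of $A\in\binom{[2k]}{k}$ with $|A\cap I|\notin[k_1,2k_1]$ by $2e^{-k_1/6}\binom{2k}{k}\le e^{-k_1/12}\binom{2k}{k}$, the last step being valid once $k_1$ exceeds an absolute constant, below which $2e^{-k_1/12}\ge1$ and there is nothing to prove. Hence $|\mathcal A'|\ge\tfrac{\alpha}{2}\binom{2k}{k}$.

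The core is a double count of
$$ T \;:=\; \sum_{\substack{x,x'\subseteq I\\ \pat{x}{x'}=P^{(1)}}} \bigl|\mathcal Y(x)\cap\mathcal Y(x')\bigr|, \qquad\text{where } \mathcal Y(x):=\{\, y\subseteq J: x\cup y\in\mathcal A' \,\}. $$
\emph{Upper bound.} If $(x,x')$ is an ordered $P^{(1)}$-pair (so $|x|=|x'|=:\ell$) and $y,y'\in\mathcal Y(x)\cap\mathcal Y(x')$ satisfied $\pat{y}{y'}=P^{(2)}$, then $x\cup y,x'\cup y'\in\mathcal A$, and since every element of $x\triangle x'\subseteq I$ precedes every element of $y\triangle y'\subseteq J$ we get $\pat{x\cup y}{x'\cup y'}=P^{(1)}P^{(2)}=P$, contradicting $P$-freeness. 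Thus $\mathcal Y(x)\cap\mathcal Y(x')$ is $P^{(2)}$-free, so $|\mathcal Y(x)\cap\mathcal Y(x')|\le\delta(|J|,k-\ell,P^{(2)})\binom{|J|}{k-\ell}\le\delta(k_2,d_2)\binom{|J|}{k-\ell}$. As the number of ordered $P^{(1)}$-pairs at level $\ell$ is exactly $\binom{|I|}{2d_1}\binom{|I|-2d_1}{\ell-d_1}$ (choose the symmetric difference, then the common part), summing over $\ell$ and applying Vandermonde's identity gives $T\le\delta(k_2,d_2)\binom{|I|}{2d_1}\binom{2k-2d_1}{k-d_1}\le(3k_1)^{2d_1}\delta(k_2,d_2)\binom{2k}{k}$. \emph{Lower bound.} Grouping the pairs defining $T$ by their common $J$-part, $T=\sum_{y\subseteq J}\bigl(\text{number of ordered }P^{(1)}\text{-pairs in }\mathcal F(y)\bigr)$, where $\mathcal F(y):=\{x\subseteq I:x\cup y\in\mathcal A'\}\subseteq\binom{I}{k-|y|}$ is automatically a uniform family since $|y|$ forces $|x|=k-|y|$. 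An elementary supersaturation estimate — greedily delete from $\mathcal F(y)$ one endpoint of each remaining $P^{(1)}$-pattern until none is left — shows this number is at least $|\mathcal F(y)|-\delta(|I|,k-|y|,P^{(1)})\binom{|I|}{k-|y|}\ge|\mathcal F(y)|-\delta(k_1,d_1)\binom{|I|}{k-|y|}$, the bound being trivial when $\mathcal F(y)=\emptyset$. Summing over $y$, using $\sum_y|\mathcal F(y)|=|\mathcal A'|$ and Vandermonde once more, gives $T\ge|\mathcal A'|-\delta(k_1,d_1)\binom{2k}{k}$. Comparing the two bounds yields $|\mathcal A'|\le\bigl(\delta(k_1,d_1)+(3k_1)^{2d_1}\delta(k_2,d_2)\bigr)\binom{2k}{k}$, hence $\alpha\le2\delta(k_1,d_1)+2(3k_1)^{2d_1}\delta(k_2,d_2)\le\max\bigl(4\delta(k_1,d_1),\,4(3k_1)^{2d_1}\delta(k_2,d_2)\bigr)$, which together with the excluded case $\alpha\le2e^{-k_1/12}$ is the first inequality.

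The main obstacle is organizing the bookkeeping so the coefficients come out exactly as stated: one must double count over $P^{(1)}$-patterns in the initial block (not over a fixed level nor a fixed $J$-fibre), exploit that the fibres $\mathcal F(y)$ are automatically uniform, and notice that the very cheap greedy supersaturation suffices for the lower bound — a per-fibre ``at least one pattern'' count would instead cost a factor $2^{\Theta(k_1)}$ and destroy the estimate. The two appeals to Vandermonde, the verification of the hypotheses of Proposition \ref{prop: moving to different levels}, and the concentration estimate pinning down the constant $1/12$ are all routine. The second inequality follows by running the identical argument with the final block $J'=[2k-3k_2+1,2k]$ in place of $I$, fibring over the initial block, and interchanging the roles of the two factors.
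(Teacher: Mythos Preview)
Your proof is correct and follows essentially the same approach as the paper: split $P=Q_1Q_2$ via the vanishing of a partial sum, partition $[2k]$ into an initial block of size $3k_1$ and its complement, use concentration to restrict to levels $\ell\in[k_1,2k_1]$, and then combine a greedy supersaturation lower bound for $Q_1$-pairs in the first block with the $Q_2$-freeness of the common second-block fibres. The only cosmetic difference is that the paper averages down to a single dense level $\ell$ whereas you sum over all good levels at once via Vandermonde's identity; both routes yield the stated constants.
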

	
	\begin{proof}	
	To begin, for each $\ell \in [0,2d]$ let 
	$$c_{\ell } 
			= 
	\big| \big\{j \in [\ell ]: P_j = + \big \} \big| - 
	\big| \big \{j \in [\ell ]: P_j = - \big \} \big |.$$ 
	As $P$ is $d$-balanced and $P_1 = P_{2d}$, we have $c_{2d-1} = -c_1$. 
	Combined with the fact that $c_0 = c_{2d}=0$ and $c_{\ell }$ changes by exactly 
	$1$ as ${\ell }$ increases, we see that $c_{2d_1} = 0$ for some 
	$1 \leq d_1 \leq d-1$. Setting $d_2:= d-d_1$ and $Q_1 = P_1\cdots P_{2d_1}$, 
	$Q_2 = P_{2d_1 +1}\cdots P_{2d}$ it is easy to see that these patterns 
	are $d_1$-balanced and $d_2$-balanced respectively.
	
	Now suppose that ${\cal A} \subset \binom {[2k]}{k}$ with $|\A | = 
	\alpha  \binom {2k}{k}$ and that $\A $ is $P$-free. 
	We will prove the first bound above as the 
	second bound is proved identically. We will assume that $\alpha 
	\geq 2e^{-k_1/12}$ as otherwise there is nothing to show.
	Partition $[2k]$ into two consecutive intervals 
	$I_1 = [3k_1]$ and $I_2 = [3k_1 + 1, 2k]$. For each $\ell \in I_1$ 
	let $Z_{\ell } := \binom {I_1}{\ell } \times \binom {I_2}{k - \ell }$.  
	Let $L = \Big \{\ell \in I_1 : |\ell - 3k_1/2| \leq \sqrt {3k_1 \log \big ( 
	\frac {2}{\alpha } \big )} \Big \}$. 
	Note that as $|\bigcup _{\ell \notin  L} Z_{\ell }| \leq 
	\frac {\alpha }{2} \binom {2k}{k}$ by Chernoff's inequality, we have 
	$|\A \cap Z_{\ell }| \geq \frac {\alpha }{2} |Z_{\ell }|$ for some 
	$\ell \in L$. Fix such a choice of $\ell $ and set $Z := Z_{\ell }$ and 
	$\B = \A \cap Z_{\ell }$ so that $\B \subset Z$ with $|\B| \geq 
	\frac {\alpha }{2} |Z|$. 
	
	We will now prove that $\alpha $ satisfies
		\begin{equation}
			\label{equation: double upper bound on delta} 
				\alpha 
						\leq 
				\max 
					\Big ( 
						4 \delta (|I_1|, \ell ,Q_1), 
						4|I_1|^{2d_1} \delta (|I_2|, k-\ell ,Q_2)
				 	\Big ).
		\end{equation}
	To see this, we may assume that $\alpha \geq 
	4 \delta (|I_1|,\ell ,Q_1)$ as otherwise there is nothing to show. 
	Consider the set ${\cal P}_{Q_1}$ given by 
		\begin{equation*}
			{\cal P}_{Q_1} = \Big \{ 
				(A,B) \in Z\times Z : 
				\pat {A \cap I_1} {B \cap I_1} = Q_1  \mbox { and } 
				A \cap I_2 = B \cap I_2 \Big \}.
		\end{equation*}
	We will first show that 
	$|(\B \times \B ) \cap {\cal P}_{Q_1}| \geq \frac{\alpha } {4 |I_1|^{2d_1}} 
	|{\cal P}_{Q_1}|$. Indeed, for each 
		$D \in \binom {I_2}{k- \ell}$ let 
	$${\cal E}({D}) := \big \{C \in \binom {I_1}{\ell}: 
	C \cup D \in {\cal B} \big \}; 			\qquad 
	  {\cal P}_{Q_1}({D}) := 
	  \big \{C,C' \in {\cal E}({D}): \pat {C} {C'} = Q_1 \big \}.$$
	Noting that each ${\cal C} \subset \binom {I_1}{\ell }$ with 
	$|{\cal C}| > \delta (|I_1|,\ell ,Q_1) \binom {|I_1|}{\ell }$ contains 
	$C, C' $ with 
	$\pat {C} {C'} = Q_1$, we find 
	$|{\cal P}_{Q_1}({D})| \geq |{\cal E}(D)| - \delta (|I_1|,\ell ,Q_1) 
	\binom {|I_1|}{\ell }$. Combined these give 
		\begin{equation}
			\label{equation: lower bound on the Q_1 pairs in on half}
			\big |(\B \times \B ) \cap {\cal P}_{Q_1}|  
				= 
		  \sum _{D \in \binom {I_1}{\ell}} | {\cal P}_{Q_1}({D})|
				\geq 
		  \sum _{D \in \binom {I_2}{k- \ell}} 
		  	\Big (|{\cal E}(D)| - 
		  	\delta (|I_1|,\ell ,Q_1) \binom {|I_1|}{\ell } \Big )
		  		\geq 
		  \frac {\alpha }{4} |Z|,
		\end{equation}
	The final inequality here holds since $\sum _{D \in \binom {I_2}{k- \ell}} 
	|{\cal E}(D)| 	= |\B | \geq \frac {\alpha }{2} |Z|$ and 
	$\alpha  \geq 4\delta (|I_1|,\ell ,Q_1)$. Lastly, using that $|{\cal P}_{Q_1}| 
	\leq |I_1|^{2d_1} |Z|$ together with  
	\eqref{equation: lower bound on the Q_1 pairs in on half}, we obtain 
	$|({\cal B} \times {\cal B} )\cap {\cal P}_{Q_1}| \geq \frac{\alpha }
	{4 |I_1|^{2d_1}}|{\cal P}_{Q_1}|$. 
	 
	Now, from this bound we find a choice of $C,C' \in \binom {I_1}{\ell }$ with 
	$\pat C {C'} = Q_1$ such that the set
	$${\cal F}_{C,C'} 
			= 
	\Big \{ D \in \binom {I_2}{k- \ell }: C\cup D, 
	{C'}\cup D \in {\cal B} \Big \}$$ 
	satisfies $|{\cal F}_{C,C'}| \geq 
	\frac{\alpha }
	{4 |I_1|^{2d_1}}\binom {n_2}{k - \ell }$.
	However, if $D,D' \in {\cal F}$ with $\pat D {D'} = Q_2$ 
	then $C\cup D, C '\cup D' \in \A $ and $\pat {C \cup D}{C' \cup D'} = 
	 Q_1Q_2 = P$. As $\A $ is $P$-free we see  
	${\cal F}_{C,C'} \subset \binom {I_2}{k- \ell}$ is $Q_2$-free. This gives 
	$\frac {\alpha }{4|I_1|^{d_1}} \leq \delta (|I_2|,k- \ell ,Q_2)$ 
	and proves \eqref{equation: double upper bound on delta}.
	
	To complete the proof, note that as $\alpha \geq 2 e^{-k_1/12}$, by 
	definition of $L$ we have 
	$\ell \in L \subset [k_1,2k_1]$ and $k - \ell \in [k_2, k_2 + k_1]$. 
	As $|I_1| = 3k_1$ and $|I_2| = 2k - 3k_1 = 2k_2 + k_1$, by Proposition 
	\ref{prop: moving to different levels} we find 
	$$ \delta (|I_1|,\ell ,Q_1) \leq \delta (2k_1,k_1,Q_1) = \delta (k_1,d_1)
	\qquad \mbox{ and } \qquad 
	\delta (|I_2|,k -\ell ,Q_1) \leq \delta (2k_2,k_2,Q_2) = 
	\delta (k_2,d_2).$$
	Combined with \eqref{equation: double upper bound on delta} 
	this completes the proof.
\end{proof}

\begin{proof}[Proof of Theorem \ref{thm: general pattern theorem}]
	We prove by induction on $d$ that with 
	$a_d = (8d)^{5d}$ and $c_d = 6d8^{-d}$ we have
		\begin{equation}
			\label{equation: recursion equation for general pattern}			
			\delta (k,d) 
				\leq
			a_dk^{-c_d}.
		\end{equation}
	For $d =1$ we have 
	$P = +- $ or $P = -+$ and ${\mathcal A} \subset \binom {[2k]}{k}$ 
	is $P$-free simply means that 
	$|A \triangle B| \neq 2$ for all $A,B \in {\cal A}$. 
	It is well known that such families satisfy 
	$|{\cal A}| \leq \frac{1}{k} \binom {2k}{k}$. Indeed, for each 
	$C \in \binom {[2k]}{k +1}$ let $y_C$ denote the number of $A\in \A$ with 
	$A \subset C$. Then 
		\begin{equation*}
			\label{equation: y_C sum bound}
			\sum _{C \in \binom {[2k]}{k +1}} y_C 
				=
			\big | \big \{(A,C) \in {\mathcal A} \times 
			\binom {[2k]}{k +1}: A \subset C \big \} \big | = 
			|{\mathcal A}| \times k.
		\end{equation*}
	However, if $|A \triangle B| \neq 2$ for all $A,B \in {\cal A}$ we must have 
	$y_C \leq 1$ for all $C$. Rearranging, we obtain the claimed upper bound on 
	$|{\cal A}|$. This easily gives that 
	\eqref{equation: recursion equation for general pattern} holds for $d=1$.
	
	We now prove the result for a $d$-balanced pattern $P$, assuming by 
	induction that the theorem holds for all $d'$-balanaced patterns with 
	$d' < d$. We can assume that $k \geq a_d^{1/c_d}$ as otherwise the 
	statement is trivial. We will first prove this when $P$ begins 
	and ends with different signs, using Lemma 
	\ref{lem: general patterns, differing signs}. To apply this, let 
	$\gamma = {8 {(a_{d-1})}^{1/2}}{k^{-\frac{c_{d-1}}{4}}}$
	and note that $\gamma \geq {8 {(a_{d-1})}^{1/2}}{k^{-\frac{1}{4}}} \geq 
	16(\log k ) k^{-1/2}$ since 
	$k^{1/4} /\log k \geq 1/32 \geq 2(a_{d-1})^{-1/2}$. Therefore we can apply 
	Lemma \ref{lem: general patterns, differing signs} 
	to find 
	\begin{align*}
			\delta (k,P) 
					\leq 
			\max \Big (\gamma , 6\sqrt { \delta \big (  \big \lceil \frac{\gamma ^2 k}{64}  \big \rceil , d-1 \big )}\Big ) 
					& \leq 
			\max \Big ({8 {(a_{d-1})}^{1/2}}{k^{-\frac{c_{d-1}}{4}}} , 
			6{\sqrt { {a_{d-1} }{ {\big (a_{d-1}{k^{1-\frac{c_{d-1}}{2}}} \big )^{-c_{d-1}}}}} }
			\Big )\\
					& \leq 
			8 (a_{d-1})^{1/2}k^{-\frac {c_{d-1}}{4}} \leq 
			a_dk^{-c_d}.
	\end{align*}
	The second inequality here uses that Lemma 
	\ref{lem: general patterns, differing signs} holds for $d-1$ by 
	induction, the third that $(a_{d-1})^{-c_{d-1}} \leq 1$ 
	and $1-\frac{c_{d-1}}{2} \geq \frac{1}{2}$ and the last inequality 
	uses that $c_{d} \leq \frac {c_{d-1}}{4}$.
	
	We now move to the case where $P$ starts and ends with 
	the same signs. Given $P$ let $d_1$ and $d_2$ be as in Lemma 
	\ref{lem: general patterns, same signs} so that $d_1 + d_2 = d$ with 
	$d_i \geq 1$. We will assume that $d_1 \leq d_2$ as the other case follows 
	similarly. Let us set $k_1 = \lceil k^{\beta } \rceil $ where 
	$\beta = \frac {c_{d_2} }{ 2d_1 + c_{d_1}}$. Set $k_2 = k - 2k_1 
	\geq k - 4k^{\beta } \geq k - 4k^{1/2} \geq \frac {k}{2}$ for 
	$k \geq 2^6$. Then by Lemma \ref{lem: general patterns, same signs} we 
	have
	\begin{align*}
			\delta (k, P) 
						&\leq 
				\max 
					\Big ( 2e^{-k_1/12},
						4 \delta (k_1 ,d_1), 
						4(3k_1)^{2d_1} \delta (k_2,d_2)
				 	\Big )\\
				 		&\leq 
				\max 
					\Big ( 2e^{-k^{\beta }/12},
						4 a_{d_1} r ^{-\beta c_{d_1}}, 
						4(6k^{\beta })^{2d_1} a_{d_2} \Big ( \frac{k}{2}\Big ) ^{-c_{d_2}}
				 	\Big )\\
				 		&\leq 
				 \max 
					\Big ( 2e^{-k^{\beta }/12},
						4 a_{d_1} k ^{-\beta c_{d_1}}, 
						8^{2d_1 + 3} a_{d_2} k^{2d_1 \beta - c_{d_2}}  \Big )\\
						&\leq 
				 \max 
					\Big ( 2e^{-k^{\beta }/12},
						8^{2d_1+3} a_{d_2} k^{- \frac {c_{d_1}c_{d_2}}{2d_1 + c_{d_1}} } \Big ) \leq a_d k^{-c_{d}}.
	\end{align*}
The first part of the final inequality here uses $a_d \geq 2k^{c_d}$ for 
$k \leq (a_d / 2)^{1/c_{d}}$ and that $e^{-k^{\beta }/12} \leq k^{-c_{d}}$ for $k \geq (a_d / 2)^{1/c_{d}}$. The second part uses that 
$8^{2d_1 +3 }a_{d_2} \leq a_d$ and that since $d = d_1 + d_2$ and $d \leq 2d_2$ we have 
$c_{d} \leq 12 d_2 8^{-d} \leq 
\frac {36 d_1 d_2 8^{-(d_1 + d_2)}}{2d_1 +1} \leq \frac {c_{d_1}c_{d_2}}{2d_1 + 1} 
\leq \frac {c_{d_1}c_{d_2}}{2d_1 + c_{d_1}} $. This completes this case and the proof of the theorem. 
	\end{proof}

\section{Interval patterns}
In this section, we first prove Theorem \ref{thm: interval pattern theorem}. We then give several lower bounds for the case $n= 2k$ depending on value of $d$.

\subsection{Upper Bound on $\delta (n,n/2,\inte d)$}
\begin{proof}[Proof of Theorem \ref{thm: interval pattern theorem}]
	Let $m= \big \lfloor\frac{n}{8d^2} \big \rfloor$.
	We partition $[n]$ into $m$ intervals, $[n] = 
	I_1 \cup \cdots \cup I_m$ with $|I_i| = \lfloor 8d^2 \rfloor$ 
	or $|I_i| = \lceil 8d^2 \rceil$ for all $i \in [m]$.

	Consider 
	the following way of choosing elements from 
	$\binom {[n]}{n/2}$. First select a set $T \subset \binom {[n]} {n/2-d}$ uniformly at random.
	Let $J= \big \{i \in [m]:|I_i\setminus T| \geq d \big \}$.
	As $d <|I_i|/2$, for every $i \in [m]$ we have
	$${\mathbb P}(i \in J)={\mathbb P}(|I_i \setminus T| \geq d)>{\mathbb P}(|I_i \cap T| \leq |I_i|/2)\geq \frac{1}{2}.$$
	If $i \in J$, further select a set $S_i \subset 
	\binom{I_i \setminus T} {d}$ uniformly at random, and 
	set $A_i=T\cup S_i$. If $i \in [m] \setminus J$ simply 
	set $A_i=\emptyset$. 
	
Now for every	$i,j\in J$ with $i<j$, we have $\pat {A_i}{A_j} = \inte d$. Also for $i \notin J$ we have $A_i \notin {\mathcal A}$, since $|A_i| = 0 \neq n/2$. We conclude that there is at most one index $i\in [m]$ with $A_i \in {\mathcal A}$. Equivalently,	
$$\sum_{i=1}^m \mathbf{1}_{A_i \in {\mathcal A}}=\sum_{i=1}^m \sum _{A \in {\mathcal A}} \mathbf {1}_{A_i=A} \leq 1.$$ This is true for any choice of $T$ and $S_i$'s, so in particular if we	take the expectation on both sides, we have
	\begin{equation}
		\label{expectation count}
		\sum _{i=1}^m  \sum _{A \in {\mathcal A}}  
		{\mathbb P}(A_i=A)
		 \leq 1.
	\end{equation}
But as $A_i \notin {\mathcal A}$ for $i \notin J$, given any $A \in {\mathcal A}$ we get that ${\mathbb P}(A_i=A)={\mathbb P}(A_i=A|i \in J){\mathbb P}(i \in J )>\frac{1}{2}{\mathbb P}(A_i=A|i\in J)$. Rewriting (\ref{expectation count}), this gives
\begin{equation}
\label{conditional expectation}
		\sum _{i=1}^m  \sum _{A \in {\mathcal A}}  
		\frac{ {\mathbb P}(A_i=A|i \in J) }{2}
		 \leq 1.
		\end{equation}

\begin{lemma} 
\label{compare}
Let $A \in \binom{[n]}{n/2}$ be a fixed set.
If $|A \cap I_i|\geq \frac{|I_i|}{2} + d$,
then 
 ${\mathbb P}(A_i=A|i \in J) \geq \frac{1}{\binom {n} {n/2}}$.
\end{lemma}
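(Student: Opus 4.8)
The plan is to reduce the statement to an essentially exact computation of $\mathbb{P}(A_i=A)$ and then estimate that quantity. First I would observe that the event $\{A_i=A\}$ is contained in $\{i\in J\}$: if $i\notin J$ then $A_i=\emptyset\neq A$, and if $i\in J$ with $A_i=A$ then $S_i=A\setminus T$ has size $d$ and is a subset of $I_i\setminus T$, forcing $|I_i\setminus T|\geq d$. Hence $\mathbb{P}(A_i=A,\ i\in J)=\mathbb{P}(A_i=A)$, and since $\mathbb{P}(i\in J)\leq 1$ we get $\mathbb{P}(A_i=A\mid i\in J)\geq\mathbb{P}(A_i=A)$; so it suffices to show $\mathbb{P}(A_i=A)\geq 1/\binom{n}{n/2}$.

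Next I would condition on the choice of $T$. For $A_i=A$ to occur we need $T\subseteq A$, $|T|=n/2-d$, and $S_i=A\setminus T\subseteq I_i$; equivalently $T=A\setminus S$ for some $S\in\binom{A\cap I_i}{d}$ (note $|A\setminus S|=n/2-d$, and $S\subseteq I_i$ forces $S\subseteq A\cap I_i$). For each such $S$ we have $\mathbb{P}(T=A\setminus S)=1/\binom{n}{n/2-d}$, and given $T=A\setminus S$ one checks $I_i\setminus T=(I_i\setminus A)\cup S$ (a disjoint union, as $S\subseteq A$), so $i\in J$ and the conditional probability that $S_i=S$ equals $1/\binom{|I_i\setminus A|+d}{d}$. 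Since $|I_i\setminus A|$ does not depend on $S$, summing over the $\binom{|A\cap I_i|}{d}$ admissible sets $S$ gives
$$\mathbb{P}(A_i=A)=\frac{\binom{|A\cap I_i|}{d}}{\binom{n}{n/2-d}\,\binom{|I_i\setminus A|+d}{d}}.$$

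To finish I would invoke two monotonicity facts. First, $\binom{n}{n/2-d}\leq\binom{n}{n/2}$, since $n/2$ maximises the binomial coefficients in row $n$. Second, $\binom{|A\cap I_i|}{d}\geq\binom{|I_i\setminus A|+d}{d}$: the map $x\mapsto\binom{x}{d}$ is nondecreasing on the nonnegative integers, and $|A\cap I_i|\geq|I_i\setminus A|+d$ is equivalent to $2|A\cap I_i|\geq|I_i|+d$, which follows from the hypothesis $|A\cap I_i|\geq|I_i|/2+d$ (indeed then $2|A\cap I_i|\geq|I_i|+2d$). Combining these with the displayed identity yields $\mathbb{P}(A_i=A)\geq 1/\binom{n}{n/2}$, as claimed. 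I do not expect a genuine obstacle here; the only delicate point is the set-theoretic bookkeeping — verifying $\{A_i=A\}\subseteq\{i\in J\}$ (so conditioning on $i\in J$ cannot hurt) and that once $T$ is pinned to $A\setminus S$ the ground set for $S_i$ is exactly $(I_i\setminus A)\cup S$. As sanity checks I would note the extreme case $I_i\subseteq A$ (then $|I_i\setminus A|=0$ and the denominator factor is $\binom{d}{d}=1$), and that $|A\cap I_i|\geq|I_i|/2+d\geq d$ using $|I_i|\geq 2d$, so $\binom{|A\cap I_i|}{d}\geq 1$.
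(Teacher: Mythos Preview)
Your proof is correct and follows the same direct-computation approach as the paper: count the pairs $(T,S_i)$ producing $A_i=A$ and compare with $1/\binom{n}{n/2}$. Your execution is actually a bit cleaner than the paper's: you first reduce to bounding $\mathbb{P}(A_i=A)$ via the inclusion $\{A_i=A\}\subseteq\{i\in J\}$, and then use the monotonicity $\binom{|A\cap I_i|}{d}\geq\binom{|I_i\setminus A|+d}{d}$ together with $\binom{n}{n/2-d}\leq\binom{n}{n/2}$, whereas the paper substitutes the specific value $|I_i|\approx 8d^2$ and compares falling factorials term by term to extract a factor $2^{-d}$ that it then cancels against $(n)_d/(n/2)_d$.
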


\begin{proof}
Indeed, ${\mathbb P}(A_i=A|i \in J) = \frac {N_i(A)}{N_i}$ where 
	\begin{align*}
		N_i(A) &:= \Big | \Big \{(S_i,T): S_i \in \binom{I_i}{d}, \: T \in \binom {[n]\setminus S_i}{n/2-d},\: S_i \cup T=A \Big \} \Big |;\\
		N_i 	&:= \Big | \Big \{ (S_i,T): S_i \in \binom{I_i}{d}, \: T \in \binom {[n]\setminus S_i}{n/2-d} \Big \} \Big |.
	\end{align*}
However, we have
 \begin{align*}
	\frac {N_i(A)}{N_i} 
		\geq  	
 	\frac {\binom{4d^{2}+d} {d}}{\binom{8d^2} {d}\binom{n-d}  {\frac{n}{2}-d}}
 		=
 	\frac{(4d^{2}+d)_{d}(\frac{n}{2}-d)!\frac{n}{2}!}{(8d^{2})_{d}(n-d)!} 
 		\geq 
 	\frac{(\frac{n}{2}-d)!\frac{n}{2}!}{2^d(n-d)!}
 	>\frac{(n/2)!(n/2)!}{(n)!} = \frac {1}{\binom {n}{n/2}}.
\end{align*}
\end{proof}

For a set  $A \in \binom {[n]} {n/2}$, denote by $G(A)=\big |\big \{i \in [m]: |A \cap I_i|\geq \frac{|I_i|}{2} +d \big \} \big |$. From Lemma \ref{compare} it follows that for any given $A$, we have $\sum _{i =1 }^m {\mathbb P}\big (A_i=A|i \in J \big )\geq G(A) \times \frac{1}{\binom {n} {n/2}}$. Together with (\ref{conditional expectation}), we obtain
\begin{equation} 
\label{probability sum}
\sum _{A \in {\mathcal A}}G(A)
\leq 2\binom {n} {n/2}.
\end{equation}
We call a set $A \in \binom {[n]} {n/2}$ \emph{bad}, if $G(A)<m/5$. Otherwise, we say that $A$ is \emph{good}. Let ${\mathcal B}$ be the family of all bad sets.

\begin{lemma}
$|{\mathcal B}|=o(\frac{1}{m} \binom {n} {n/2})$ for sufficiently large $n$.
\end{lemma}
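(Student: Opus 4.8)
The plan is to bound $|\mathcal B|$ by estimating the probability that a uniformly random $A\in\binom{[n]}{n/2}$ is bad, i.e.\ $|\mathcal B|=\binom{n}{n/2}\,\mathbb P\big(G(A)<m/5\big)$. For such an $A$, set $X_i=\mathbf 1\big[\,|A\cap I_i|\ge |I_i|/2+d\,\big]$ for $i\in[m]$, so that $G(A)=\sum_{i=1}^m X_i$ and $A$ is bad exactly when $\sum_i X_i<m/5$. Each $|A\cap I_i|$ is hypergeometric with mean $|I_i|/2$ and standard deviation $(1-o(1))\sqrt{|I_i|}/2$, and since $|I_i|=(1+o(1))8d^2$ the threshold $|I_i|/2+d$ lies only about $1/\sqrt2<0.72$ standard deviations above the mean. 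So the first step is a marginal estimate: there is an absolute constant $p_0>1/5$ with $\mathbb P(X_i=1)\ge p_0$ for every $i$, for all sufficiently large $n$. This is the one computation I expect to need care: by a Berry--Esseen estimate the probability is $\mathbb P\big(\mathcal N(0,1)>1/\sqrt2\big)+o(1)\approx 0.24>1/5$ for large $d$, but one must also check the finitely many small-$d$ cases directly (where the normal approximation is weak) to be sure the constant genuinely exceeds $1/5$; this is exactly why the threshold in the definition of ``good'' is $m/5$ and not $m/4$.

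Next I would control the sum $G(A)=\sum_i X_i$ by a Chernoff-type lower-tail bound. The $X_i$ are not independent, since $|A|$ is fixed, but each $X_i$ is a non-decreasing function of the indicators $(\mathbf 1_A(v))_{v\in I_i}$ on the disjoint blocks $I_i$, and these indicators form a negatively associated family (uniform sampling without replacement from $[n]$). Hence $(X_1,\dots,X_m)$ is negatively associated, so $\sum_i X_i$ obeys the same lower-tail bound as a sum of independent $\mathrm{Bernoulli}(p_i)$ variables; since every $p_i\ge p_0>1/5$, this gives
$\mathbb P\big(G(A)<m/5\big)\le \mathbb P\big(\mathrm{Bin}(m,p_0)<m/5\big)\le e^{-cm}$
for an absolute constant $c=c(p_0)>0$. (Alternatively one can pass to the product measure on $\mathcal P[n]$ conditioned on $|A|=n/2$, where the $X_i$ become genuinely independent, at the cost of a factor $\Theta(\sqrt n)$; this is easier to justify but the $\sqrt n$ loss is only affordable when $m$ is not too small relative to $\log n$, whereas the negative-association route covers the whole range of $d$.)

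Finally, since $|\mathcal B|\le e^{-cm}\binom{n}{n/2}$ and we may assume $d=o(\sqrt n)$ — for $d\gg\sqrt n$ the target bound $O(d^2/n)$ in Theorem~\ref{thm: interval pattern theorem} is vacuous — we have $m=\lfloor n/(8d^2)\rfloor\to\infty$, hence $m\,e^{-cm}=o(1)$ and $|\mathcal B|=o\big(\tfrac1m\binom{n}{n/2}\big)$, as claimed. To summarize: the main obstacle is the marginal bound $p_0>1/5$ (including the small-$d$ verification), together with the small amount of care needed to invoke concentration for a sum of dependent indicators without spending a factor we cannot afford; the rest is routine.
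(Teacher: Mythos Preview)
Your plan is correct and follows essentially the same route as the paper: bound $\mathbb P(X_i=1)$ from below by a constant exceeding $1/5$, observe that the $X_i$ are negatively dependent, and apply a Chernoff-type bound for such variables to get $\mathbb P(G(A)<m/5)\le e^{-cm}=o(1/m)$. The only cosmetic differences are that the paper obtains the marginal bound ($>0.21$) by a direct Stirling computation rather than via a normal approximation plus a small-$d$ check, and that it phrases the dependence as ``negatively correlated'' (citing Panconesi--Srinivasan) rather than using the negative-association closure property you invoke; both variants justify the same concentration step.
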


\begin{proof}
For a uniform random choice of a set $A \subseteq \binom{[n]}{n/2}$, let $X_i$ be a random variable, with $X_i=1$ if $|A\cap I_i|>\frac{|I_i|}{2}+d$, and $X_i =0$ otherwise. Let $Z=X_1+\dots +X_m$. To prove the lemma, we need to show that ${\mathbb P}(Z<m/5)\ll \frac{1}{m}$. By linearity of expectation, ${\mathbb E}Z=m{\mathbb EX_i}=m{\mathbb P}(X_i=1)$. Notice that for every $i\neq j$, $X_i$ and $X_j$ are negatively correlated, since if $A$ has many elements in one interval, it is less likely to have many elements on another interval.
	\begin{align*}
		{\mathbb P}(X_i=0)
			=
		\frac{\sum_{i=0}^{4d^2+d}\binom {8d^2} {i}\binom {n-8d^2}{n/2-i}}{\binom {n}{n/2}} 
			\leq 
		\frac{1}{2}+\frac{\sum_{i=4d^2}^{4d^2+d}\binom {8d^2} {i}
		\binom {n-8d^2}{n/2-i}}{\binom{n}{n/2}}
			\leq 
		\frac{1}{2} + 
		\frac{d\binom{8d^2}{4d^2}\binom{n-8d^2}{n/2-4d^2}}{\binom{n}{n/2}}
			<
		0.79.
	\end{align*}
The second inequality uses Stirling's formula. Therefore ${\mathbb P}(X_i=1)={\mathbb E}X_i>0.21$. Using linearity of expectation gives ${\mathbb E}Z=\sum_{i=1}^m {\mathbb E}X_i>0.21m$.

By a version of the Chernoff-Hoefding bound for negatively correlated variables \cite{Panconesi-Srinivasan}, we deduce that \newline ${\mathbb P}(A \in {\mathcal B})={\mathbb P}(Z<0.2m)<{\mathbb P}(Z-{\mathbb E}Z>0.01m)=o(\frac{1}{m})$, finishing the proof.
\end{proof}

Therefore, if $|{\mathcal A}|\geq \frac{2}{m}\binom {n} {n/2}$, then $|{\mathcal A} \setminus {\mathcal B}| = (1-o(1))|{\mathcal A}|$. Using (\ref{probability sum}), we see that
\begin{equation}
(1-o(1))\frac {m|\mathcal{A}|}{10} \leq \sum _{A \in {\mathcal A}\setminus {\cal B}}G(A)\leq  \sum _{A \in {\mathcal A}}G(A) \leq \binom {n} {n/2}.
\end{equation}
Equivalently $|{\mathcal A}|= O(\frac {1}{m} \binom {n}{n/2}) = O\big (\frac{d^2}{n}  \binom {n} {n/2} \big )$, as required.
\end{proof}

\subsection{Lower Bound on $\delta (n,n/2,\inte d)$}
For the lower bounds, we provide different lower bounds, depending on the range of $d$.
\begin{thm} The following hold:
\begin{enumerate}[(i)]
	\item If $d=o(\sqrt{n})$, there is an $\inte d$-free family ${\mathcal A} \subseteq \binom{[n]}{n/2}$ with $|\A | = \Omega(\max\{\frac{1}{nd},\frac{d^2}{n^{3/2}}\} \cdot \binom{n}{n/2})$.
	\item If $d=c\sqrt{n}$, there is an $\inte d$-free family ${\mathcal A} \subseteq \binom{[n]}{n/2}$ with $|\A | = \Omega _c(\binom{n}{n/2})$.
\end{enumerate}
\end{thm}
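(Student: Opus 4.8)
The plan is to exhibit, for the two regimes of $d$ in the statement, explicit $\inte d$-free families of the required size. Everything rests on two elementary consequences of the definition of $\inte d$. Suppose $\pat A B = \inte d$ and write $A \triangle B = \{j_1 < \cdots < j_{2d}\}$, so that $A \setminus B = \{j_1,\dots,j_d\}$ and $B \setminus A = \{j_{d+1},\dots,j_{2d}\}$. First, since $A$ and $B$ agree outside $A\triangle B$,
$$\sum_{b \in B} b \ - \ \sum_{a \in A} a \ = \ \sum_{i=1}^{d}\big(j_{d+i}-j_i\big),$$
and as $j_1<\cdots<j_{2d}$ are distinct integers in $[n]$ each term satisfies $d \le j_{d+i}-j_i \le n-1$; hence this difference lies in $[d^{2},\,d(n-1)]\subset(0,dn)$. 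Second, taking any $t^{\ast}$ with $j_d \le t^{\ast} < j_{d+1}$, all of $A\setminus B$ lies in $[t^\ast]$ and none of $B\setminus A$ does, so $|A\cap[t^{\ast}]| - |B\cap[t^{\ast}]| = d$.

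\emph{Proof of (i).} For the $\tfrac{1}{nd}$ bound I would fix $r$ and take $\mathcal{A}_r = \{A \in \binom{[n]}{n/2} : \sum_{a\in A} a \equiv r \pmod{dn}\}$: by the first fact, $\sum_{b\in B}b - \sum_{a\in A}a \not\equiv 0 \pmod{dn}$ whenever $\pat A B = \inte d$, so every $\mathcal A_r$ is $\inte d$-free, and since they partition $\binom{[n]}{n/2}$ some $r$ gives $|\mathcal A_r| \ge \tfrac{1}{dn}\binom{n}{n/2}$. For the $\tfrac{d^2}{n^{3/2}}$ bound I would instead take $\mathcal{A}_s = \{A : s \le \sum_{a\in A} a < s + d^{2}\}$, which is $\inte d$-free since the sum-difference is always $\ge d^{2}$. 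To choose $s$, observe that $S(A) := \sum_{a\in A} a$ has mean $\tfrac{n(n+1)}{4}$ and variance $\Theta(n^{3})$, so by Chebyshev a window $W$ of length $O(n^{3/2})$ contains $S(A)$ for at least $\tfrac{3}{4}\binom{n}{n/2}$ of the sets $A$; splitting $W$ into $\lceil |W|/d^{2}\rceil$ blocks of length $d^2$ and averaging produces an $s$ with $|\mathcal A_s| = \Omega\big(\tfrac{d^{2}}{n^{3/2}}\binom{n}{n/2}\big)$ (here $d=o(\sqrt n)$ ensures $d^2\le|W|$). Taking whichever of these two families is larger proves (i).

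\emph{Proof of (ii).} For $d = c\sqrt n$ I would take the ``tube''
$$\mathcal{A} \ = \ \Big\{A \in \binom{[n]}{n/2} \ :\ \big|\,|A\cap[t]| - \tfrac{t}{2}\,\big| \le \tfrac{d-1}{2}\ \text{ for all } 0 \le t \le n\Big\}.$$
If $A,B\in\mathcal A$ and $\pat A B = \inte d$, then by the second fact $d = |A\cap[t^{\ast}]| - |B\cap[t^{\ast}]| \le \big(\tfrac{t^\ast}{2}+\tfrac{d-1}{2}\big) - \big(\tfrac{t^\ast}{2}-\tfrac{d-1}{2}\big) = d-1$, a contradiction, so $\mathcal A$ is $\inte d$-free. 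What remains is to show $|\mathcal A| = \Omega_c\big(\binom{n}{n/2}\big)$, i.e. that a uniformly random $A\in\binom{[n]}{n/2}$ satisfies $\max_t \big|\,|A\cap[t]|-t/2\,\big| \le \tfrac{d-1}{2}$ with probability bounded below by a constant depending only on $c$. The process $t\mapsto |A\cap[t]|-t/2$ is (up to the factor $2$) the height of a uniformly random $\pm 1$ path of length $n$ from $0$ to $0$, so this is a statement about a lattice bridge staying in a strip of width $\Theta_c(\sqrt n)$; it follows from a maximal inequality for such bridges — concretely from the reflection-principle (inclusion–exclusion over reflections) count of bridges confined to a strip, or from Donsker's invariance principle together with the fact that $\mathbb P(\sup_{[0,1]}|B^{\mathrm{br}}|\le a)>0$ for every $a>0$.

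\emph{Main obstacle.} The families and their $\inte d$-freeness are immediate from the two facts above, and the density estimate in (i) is a one-line Chebyshev-and-averaging argument; the only real point is the density estimate in (ii) — that the lattice bridge stays inside a strip of width $\Theta_c(\sqrt n)$ with probability $\Omega_c(1)$. This genuinely needs a maximal/anticoncentration input: a union bound over $t$ is worthless here, since $\mathbb P\big(|\,|A\cap[t]|-t/2\,| > \lambda\sqrt n\big)$ is already a positive constant for $t$ near $n/2$ and there are $\Theta(n)$ values of $t$.
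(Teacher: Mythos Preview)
Your proposal is correct and follows essentially the same route as the paper: for (i) the two sum-based families (residues modulo $dn$, and a window of length $d^2$ located via Chebyshev) are exactly the paper's constructions, and for (ii) your ``tube'' family is the same as the paper's (the paper uses half-width $d/4$ rather than $(d-1)/2$, which only shrinks the family). The one difference is that the paper proves the $\Omega_c(1)$ bridge-in-strip probability from scratch---chopping $[n]$ into $T=\Theta(1/c^2)$ equal blocks, conditioning on the bridge's values at the endpoints of the blocks, and using the reflection principle on each block---whereas you defer to the reflection/ballot count or Donsker's theorem; both are valid ways to handle the only nontrivial step.
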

\begin{proof}
First we prove \emph{(i)}. For a set $A \in \binom {[n]} {n/2}$ let $S(A):=\sum_{i \in A} i$, the sum of the elements in $A$. Observe that if $\pat A B = \inte d$ then
$0<|S(A)-S(B)|<nd$. Thus for any $0 \leq i \leq nd-1$, the family 
${\mathcal A}_i:= \big \{A \in \binom {[n]}{n/2}|\: S(A) \equiv i (\mod \: nd) \big \}$ forms an $\inte d$-free family. By the pigeonhole principle, we can find such $i$ so that $|{\mathcal A}_i| \geq \frac{1}{nd}\binom{n}{n/2}$.

To obtain the second bound from \emph{(i)}, note that if we choose a set $A \in \binom {[n]}{n/2}$ uniformly at random,  
\begin{equation} 
\label{exp}
{\mathbb E}[S(A)]=\frac{n(n+1)}{4}.
\end{equation}
To calculate the variance, let \[X_i=\begin{cases} 1 &\mbox{if } i \in A \\
0 & \mbox{if } n \notin A. \end{cases} \]
Then $S(A)=\sum_{i=1}^n iX_i$. Now ${\mathbb E}[X_i]=\frac{1}{2}$ every $i \in [n]$ and ${\mathbb E}[X_iX_j]=\frac{1}{4}(1-\frac{1}{n-1})$.
Using this, we find
\begin{align}
	\label{var}
	{\mathbb V}\mbox{ar}( S(A))  = {\mathbb E}[(\sum_{i=1}^n iX_i)^2]-{\mathbb E}[\sum_{i=1}^n iX_i)]^2 & \leq \sum _{i\in [n]}i^2{\mathbb E}[ X_i] + 
\sum _{i \neq j} ij\big ({\mathbb E}[X_iX_j]-{\mathbb E}[X_i]{\mathbb E}[X_j]\big )\nonumber \\	&\leq \sum _{i\in [n]} \frac{i^2}{2} \leq \frac{n^3}{2}.
\end{align} 
From \eqref{exp} and \eqref{var} together, by Chebyshev's inequality we get ${\mathbb P}(|S(A) - n(n+1)/4| \leq n^{3/2}) \geq 1/2$. Equivalently, $|\{A \in \binom {[n]}{n/2}: |S(A) - n(n+1)/4| \leq n^{3/2}\}| \geq \frac {1}{2} \binom {n}{n/2}$. By an easy averaging argument, for some value $m \in [\frac{n(n+1)}{4}-\frac{n^{3/2}}{3},\frac{n(n+1)}{4}+\frac{n^{3/2}}{2}]$.
\begin{equation*}
	|\{A \in \binom {[n]}{n/2}: S(A) \in [m -\frac {d^2}{2}, m +\frac {d^2}{2})\}| 
		\geq 
	\frac{1}{2(2n^{3/2}/d^2 +2)}\binom{n}{n/2} =\Omega \Big (\frac{d^2}{n^{3/2}}\binom{n}{n/2} \Big )
\end{equation*}  However, since two sets $A,B \in \binom {n}{n/2}$ with $\pat A B = \inte d$ have $|S(A) - S(B)| > d^2$, this completes the proof of \emph{(i)}.

To prove \emph{(ii)}, let $c>0$ be given and let $d = c\sqrt n$. Note that if $pat(A,B) = IP(d)$ then for some $i\in [n]$ we have $|A \cap [i]| \geq |B \cap [i]| + d$. This shows that ${\cal A} = \big \{A \in \binom {[n]}{n/2} : \big | |A \cap [i]| - i/2 \big | <d/4 \mbox{ for all }i\in [n]\big \}$ is an $IP(d)$-free family. We will now show that $|{\cal A}| = \Omega _c ( \binom {n}{n/2} )$.

To see this, it is convenient to identify elements of $\binom {[n]}{n/2}$ with certain walks. Let ${\cal W}_0$ denote the set of all walks $W = W_0\cdots W_n$ of length $n$ on ${\mathbb Z}$ with $W_0 = W_n = 0$ and which  either increase or decrease by $1$ in each step (i.e. $|W_i - W_{i-1}| = 1$ for all $i\in [n]$). Note that each walk $W \in {\cal W}_0$ naturally corresponds to a subset of $[n]$ of size $n/2$ consisting of those steps in $[n]$ where the walk increases. Under this correspondence, the set ${\cal A}$ corresponds to those walks in ${\cal W}_0$ which lie entirely in $[-d/4,d/4]$. 

Now select a walk $W \in {\cal W}_0$ uniformly at random. Letting $T$ denote a value to be determined, consider the following events: 
	\begin{align*}
		A  &= \big \{W_j \in [-d/4,d/4] \mbox { for all }j\in [n]\big \}\\
		B  &= \big \{W_{in/T} \in [-d/12,d/12] \mbox { for all } i\in [T-1]\big  \}\\
		C_i  &= \big \{W_{j} \in [-d/4,d/4] \mbox { for all } j\in \big [\frac{(i-1)n}{T}, \frac {in}{T} \big ] \big \},  
		\mbox{ where } i\in [T].
	\end{align*}
Also for $i\in [T-1]$ and $a_i \in [-d/12, d/12]$, let $B_i(a_i)$ denote the event
$B_i(a_i) = \big \{W_{in/T} = a_i\big \}$. We will show that 
	\begin{align}
	\label{equation: breakdown probability}
	{\mathbb P}_{W \sim {\cal W}_0}\Big (B \wedge \bigwedge _{i\in [T]} C_i \Big ) 
		\geq c' > 0,
	\end{align}
where $c'$ depends only on $c$. Since $\bigwedge _{i\in [T]} C_i \subset A$, this will prove the result. 

To begin, note that we have 
	\begin{alignat}{2}
		\label{equation: probability breakdown into events}
			{\mathbb P}_{W \sim {\cal W}_0}
			\Big (B \wedge \bigwedge _{i\in [T]} C_i \Big ) 
		&\geq 
			\sum _{a_1, \ldots ,a_{T-1} \in [-d/12,d/12]} 
			&& {\mathbb P}_{W \sim {\cal W}_0}
			\Big (\bigwedge _{i\in [T-1]}B_i(a_i) 
			\wedge \bigwedge _{i\in [T]} C_i \Big ) \nonumber \\
		& =
			\sum _{a_1, \ldots ,a_{T-1} \in [-d/12,d/12]} 
			&& {\mathbb P}_{W \sim {\cal W}_0}
			 \Big (\bigwedge _{i\in [T]} C_i \big | 
			\bigwedge _{i\in [T-1]}B_i(a_i) \Big )\nonumber \\
		& 
			&& \times {\mathbb P}_{W \sim {\cal W}_0} 
			\Big (\bigwedge _{i\in [T-1]}B_i(a_i) \Big ).
	\end{alignat}
Let ${\cal W}(a,b)$ denote the collection of random walks of length $n/T$ which start at $a$ and end at $b$. Since $C_i$ depends only on $\{W_j: j\in [(i-1)n/T, in/T]\}$, taking $a_0 = a_T = 0$ we have 
	\begin{align}
		\label{equation: breakdown to exploit independence}
			{\mathbb P}_{W \sim {\cal W}_0}
			\Big (\bigwedge _{i\in [T]} C_i \big | 
			\bigwedge _{i\in [T-1]}B_i(a_i) \Big ) 
		& =
			\prod _{i\in [T]} 	
			{\mathbb P}_{W \sim {\cal W}_0}
			\Big (C_i \big | B_{i-1}(a_{i-1}) \wedge B_i(a_i) \Big ) \nonumber \\
		& = 
			\prod _{i\in [T]} 	
			{\mathbb P}_{W \sim {\cal W}(a_{i-1},a_i)}
			\Big (W \mbox{ lies entirely in } [-d/4,d/4] \Big ).
	\end{align}	
	
\textbf{Claim:} For every $a, b \in [-d/12,d/12]$ 
we have ${\mathbb P}_{W \sim {\cal W}(a,b)}
	\Big (W \mbox{ lies entirely in } [-d/4,d/4] \Big ) \geq 1/2$.
	
Let ${\cal W}(a)$ denote the collection of all walks of length $n/T$ which begin at $a$. Let us select $W$ from ${\cal W}(a)$ uniformly at random and let $S_{n/T}$ denote the final vertex. By the reflection principle for random walks, we have
\begin{align*}
\mathbb{P}_{W \sim {\cal W}(a,b)}(W \mbox { exceeds } d/4) &= 
\mathbb{P}_{W \sim {\cal W}(a)}(W \mbox { exceeds }d/4 |S_{n/T}=b)\nonumber \\
& =\frac{\mathbb{P}_{W \sim {\cal W}(a)} (S_{n/T}=d/2-b)}{\mathbb{P}_{W \sim {\cal W}(a)} (S_{n/T}=b)}\nonumber \\ 
& = 
	\frac { \binom {{n}/{T}}{{n}/{2T} + (d/2 - b) - a}}{\binom {{n}/{T}}{{n}/2T + b - a}} 
	\leq 
	\frac { \binom {{n}/{T}}{{n}/{2T} + d/3}}{\binom {{n}/{T}}{{n}/2T + d/6}}\nonumber \\
	& = \frac {(n/2T - d/6 )_{d/6}}{(n/2T + d/6 )_{d/6}} \leq 
	\Big (1 - \frac{dT}{3n} \Big )^{d/6} \leq e^{-d^2T/36n}.
\end{align*}
Taking $T = 72/c^2$ say, we find $\mathbb{P}_{W \sim {\cal W}(a,b)}(W \mbox { exceeds } d/4) \leq e^{-2} < 1/4$. By symmetry, this gives 
${\mathbb P}_{W \sim {\cal W}(a,b)} 
\Big (W \mbox{ lies entirely in } [-d/4,d/4] \Big ) \geq 1 - 2 \times (1/4) = 1/2$, as claimed.

Now by combining \eqref{equation: breakdown to exploit independence} together with the claim in \eqref{equation: probability breakdown into events} we find 
	\begin{equation} 
		\label{equation: reduced probability bound}
		{\mathbb P}_{W \sim {\cal W}_0}
			\Big (B \wedge \bigwedge _{i\in [T]} C_i \Big ) 
		\geq
		\sum _{a_1, \ldots ,a_{T-1} \in [-d/12,d/12]} 
			2^{1-T} \times {\mathbb P}_{W \sim {\cal W}_0} 
			\Big (\bigwedge _{i\in [T-1]}B_i(a_i) \Big ).
	\end{equation}
But letting $b_i := \frac{n}{2T} + a_i - a_{i-1}$ for all $i\in [T]$ where $a_0 = a_T = 0$, we have 
\begin{equation*}
{\mathbb P}_{W \sim {\cal W}_0} \Big (\bigwedge _{i\in [T-1]}B_i(a_i) \Big ) 
= \frac { \prod _{i\in [T]} \binom {n/T}{b_i}} {\binom {n}{n/2}} = \Omega _{c,T} (d^{1-T}).
\end{equation*}
The final inequality follows by Stirling's approximation, using that $b_i \in [\frac{n}{2T} - \frac{d}{6}, \frac{n}{2T} + \frac{d}{6}]$ for all $i\in [T]$. Combined with \eqref{equation: reduced probability bound}, this gives 
${\mathbb P}_{W \sim {\cal W}_0} \Big (B \wedge \bigwedge _{i\in [T]} C_i \Big ) 
= \Omega _{c,T}(1) = \Omega _c(1)$, as required.
\end{proof}

\section{Alternating patterns}

To begin, we prove an auxiliary lemma. Given ${\bf x} = (x_i)$ and ${\bf y} = (y_i)$ in $[m]^D$ we say that ${\bf y}$ $d$-dominates ${\bf x}$ if $|\{i\in [D]: x_i \neq y_i\}| = d$ and 
$x_i \leq y_i $ for all $i\in [D]$.

\begin{lem}
	\label{lem: auxillary m-cube lemma}
	Let $d, m, D \in {\mathbb N}$ with $2md^2 \leq D$. 
	Suppose that ${\cal C} \subset [m]^D$ does 
	not contain	${\mathbf x}$ and ${ \mathbf y}$ 
	such that ${\bf y}$ $d$-dominates ${\bf x}$. 
	Then $|{\cal C}| \leq 2m^{D-1}$.
\end{lem}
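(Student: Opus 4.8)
The plan is to bound $|\mathcal{C}|$ via a compression/injection argument exploiting the forbidden $d$-domination structure along the lines of a Kruskal–Katona or "shifting" style estimate, combined with a pigeonhole on a suitably chosen set of coordinates. The key observation is that $[m]^D$ decomposes into "lines" in any fixed coordinate direction, and if $\mathcal{C}$ were large, averaging would force many coordinate-lines to be heavily occupied by $\mathcal{C}$, from which one extracts the forbidden pair.

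\textbf{Step 1 (reduction to a single heavy coordinate).} Partition the $D$ coordinates into $m d^2 \leq D/2$ disjoint blocks, say of size roughly $d^2$ each actually a simpler route: for each coordinate $i \in [D]$ and each fixed value $v \in [m]$, consider the "slab" $\{\mathbf{x} \in \mathcal{C} : x_i = v\}$. If $|\mathcal{C}| > 2m^{D-1}$, then by averaging over the $m^{D-1}$ lines in direction $i$, some line contains at least $3$ points of $\mathcal{C}$ (in fact we will want many points on many lines simultaneously); more carefully, the \emph{average} number of $\mathcal C$-points on a line in direction $i$ exceeds $2$, so a positive fraction of lines carry $\geq 2$ points.

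\textbf{Step 2 (building up a $d$-domination via distinct coordinates).} The real content: I want to find $\mathbf{x} \in \mathcal{C}$ and then successively modify it in $d$ distinct coordinates, each time increasing one coordinate value and staying inside $\mathcal{C}$, ending at some $\mathbf{y} \in \mathcal{C}$ that differs from $\mathbf{x}$ in exactly $d$ coordinates and dominates it. One natural way: order $\mathcal C$ and repeatedly apply the line-pigeonhole. Since on an average direction-$i$ line $\mathcal C$ has more than $2$ points, pick two points on such a line differing only in coordinate $i$ — this is a $1$-domination, but we need $d$ \emph{simultaneous} distinct coordinates. So instead I will iterate a \emph{compression}: define $\mathcal C_i$ to be the result of pushing every point of $\mathcal C$ down in coordinate $i$ as far as possible (to the smallest unused value on its line). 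Compressions do not decrease $|\mathcal C|$ and, crucially, preserve the "no $d$-domination" property (a standard but slightly delicate check — this is the main obstacle). After compressing in all $D$ directions we may assume $\mathcal C$ is a down-set–like object, and then a direct counting argument (each line in direction $i$ is a down-segment $\{1,\dots,t_i\}$) plus the bound $2md^2 \le D$ forces $|\mathcal C| \le 2m^{D-1}$: if $|\mathcal C| > 2m^{D-1}$ then more than half of all direction-$i$ lines have $t_i \ge 3$ for every $i$, hence averaging over $i\in[D]$ produces, for a typical point $\mathbf x$, at least $D/2 \ge md^2$ coordinates in which $\mathbf x$ can be pushed \emph{up} while staying in $\mathcal C$; a pigeonhole within those coordinates and a greedy selection of $d$ of them that can be raised simultaneously (using that the line-segments are downward closed) yields the forbidden $\mathbf y$.

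\textbf{Main obstacle.} The delicate point is verifying that single-coordinate compression preserves the no-$d$-domination hypothesis: if $\mathbf y$ $d$-dominates $\mathbf x$ after compressing coordinate $i$, one must trace back through the compression to produce a $d$-dominating pair before compression, handling the cases according to whether $i$ is among the $d$ differing coordinates and whether the compression moved $x_i$, $y_i$, both, or neither. I expect this to work exactly as in classical set-compression arguments, but it is the step that needs care. The final counting step and the choice of block/coordinate sizes are then routine given $2md^2 \le D$.
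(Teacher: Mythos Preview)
Your compression step fails: single-coordinate compression does \emph{not} preserve the no-$d$-domination hypothesis. Take $m\ge 2$, $d=2$, and $\mathcal{C}=\{\mathbf{a},\mathbf{b}\}$ with $\mathbf{a}=(1,2,1,\ldots,1)$ and $\mathbf{b}=(2,1,2,1,\ldots,1)$. These differ in three coordinates, so neither $2$-dominates the other (and $a_2>b_2$, so there is no domination at all). After compressing in coordinate $2$ you obtain $\mathbf{a}'=(1,1,1,\ldots,1)$ and $\mathbf{b}'=(2,1,2,1,\ldots,1)$, which differ in exactly two coordinates with $\mathbf{b}'\ge\mathbf{a}'$ everywhere; so $\mathbf{b}'$ now $2$-dominates $\mathbf{a}'$. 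The problem is structural: $d$-domination requires \emph{exactly} $d$ differing coordinates, so compression can turn an incomparable pair differing in $d{+}1$ coordinates into a dominating pair differing in $d$. Your ``main obstacle'' is therefore not just delicate but false as stated.

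Even granting a down-set structure, your endgame is also incomplete: knowing that $\mathbf{x}+e_i\in\mathcal{C}$ for many single coordinates $i$ does not let you conclude that $\mathbf{x}+e_{i_1}+\cdots+e_{i_d}\in\mathcal{C}$ for any $d$ of them, since down-sets are closed under decreasing, not increasing. The paper avoids all of this with a much shorter direct argument: pick a uniformly random $d$-subset $S\subset[D]$ and a random $\mathbf{z}\in[m]^{[D]\setminus S}$, and look at the combinatorial line $\{\mathbf{z}_S(i):i\in[m]\}$ obtained by setting all coordinates in $S$ equal to $i$. Any two distinct points on this line form a $d$-dominating pair, so each such line meets $\mathcal{C}$ at most once. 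A double count then gives $\sum_{C\in\mathcal{C}}\mathbb{P}(C\in\mathcal{B}_{S,\mathbf{z}})\le 1$, and a convexity computation (minimised when all symbol counts of $C$ are equal) shows each summand is at least $\tfrac{1}{2}m^{-(D-1)}$ precisely when $2md^2\le D$. This yields $|\mathcal{C}|\le 2m^{D-1}$ in a few lines, with no compression and no iteration.
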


\begin{proof}
To begin, choose a set $S \subset [D]$ with $|S| = d$ and a vector ${\mathbf z} \in [m]^{[D]\setminus S}$ uniformly at random. For each $i\in [m]$ let ${\bf z}_S(i) \in [m]^D$ denote the vector which agrees with ${\bf z}$ on coordinates in $[D] \setminus S$ and equals $i$ everywhere else. Also let ${\cal B}_{S, {\bf z}}$ denote the combinatorial line ${\cal B}_{S, {\bf z}} := \{{\bf z}_S(i): i\in [m]\}$.

Now as ${\cal C}$ does not contain any $d$-dominating pairs, for any choice of $S$ and ${\bf z}$ we have 
$|{\cal C} \cap {\cal B}_{S, {\bf z}}| \leq 1$. Letting 
$X_i$ denote the indicator random variable which is $1$ if ${\bf z}_S(i) \in {\cal C}$ and $0$ otherwise, this gives 
	\begin{equation*}
		\sum _{i\in [m]} X_i \leq 1.
	\end{equation*}
Taking expectations over all choice of $S$ and ${\bf z}$, this gives
	\begin{equation}
		\label{equation: probability C lying on the line}
		 \sum _{C \in {\cal C}} 
		{\mathbb P}( C \in {\cal B}_{S, {\bf z}} ) 
			= 
		\sum _{i\in [m]} \sum _{C \in {\cal C}} 
		{\mathbb P}( {\bf z}_S(i) = C ) 
			\leq 
		1.
	\end{equation}
However, an easy calculation gives that if $C$ has $k_i$ entries $i$ for all $i\in [m]$, then
\begin{equation*}
		{\mathbb P}( C \in {\cal B}_{S, {\bf z}} )  
			= 
		\sum _{i\in [m]} \frac {\binom {k_i}{d}}{m^{D-d}\binom {D}{d}}.
\end{equation*}
This expression is minimized when all $k_i$ are as equal as possible. Thus 
\begin{align*}
			{\mathbb P}( C \in {\cal B}_{S, {\bf z}} )  
				\geq 
			m \frac {\binom {D/m}{d}}{m^{D-d}\binom {D}{d}} 
			 	= 
			m \frac {(D/m) _ d }{m^{D-d}D_{d}} 
				& = 
			\frac{m}{m^D} \prod _{l \in [0,d-1]} 
			\Big ( 1 - \frac {l(m-1)}{D - l} \Big ) \\
				& \geq 
			\frac{1}{m^{D-1}} 
			\Big ( 1 - \sum _{l \in [0,d-1]}\frac {l(m-1)}{D/2} 
				 \Big )\\
				 & \geq 
			\frac{1}{m^{D-1}} 
			\Big ( 1 - \frac {md^2}{D} \Big ) 
				\geq 
			\frac{1}{2m^{D-1}}.
\end{align*}
The final line here used $2md^2 \leq D$. Combined with 
\eqref{equation: probability C lying on the line} this gives $|{\cal C}|/2 m^{D-1} \leq 1$, as required. \end{proof}

We are now ready for the proof of Theorem \ref{thm: alternating pattern theorem}.

\begin{proof}[Proof of Theorem \ref{thm: alternating pattern theorem}]
By Proposition \ref{prop: moving to different levels} it suffices to prove the theorem for $n = 2k$. Let $m = \lfloor \frac {\log _2(n/d^2)}{2}\rfloor $. For convenience we assume that $n$ is divisible by $m$, with $Km = n$. Let $[n] = \bigcup _{i=1}^K I_i$ be a partition of $[n]$ where $I_i = \{(i-1)m, \ldots , im-1\}$ for all $i\in [K]$. Given a set $T \subset [K]$, let $T^c = [K]\setminus T$ and let 
	\begin{align*}
		{\cal B}_T &:= \{A \subset \bigcup _{i\in T^c} I_i: 
		|A \cap I_i| \neq 1 \mbox { for all } i\in T^c\}.
	\end{align*}
Given $B \in {\cal B}_T$ and ${\bf x} \in [m]^{T}$ we also let $B({\bf x}) := B \cup \{(i-1)m + j -1: i\in T, x_i = j\}$ and 
	\begin{align*}
		{\cal C}_{B} &:= \{B({\bf x}): 
		{\bf x} \in [m]^{T}\}.
	\end{align*}
Note that for every $A \subset [n]$ there is a unique $T \subset [K]$, $B \in {\cal B}_T$ and ${\bf x} \in [m]^{T}$ such that $A = B({\bf x})$. Thus we have the disjoint union
	\begin{equation}
		\label{equation: powerset decomposition}
		\binom {[n]}{n/2}
			=  
		\bigcup _{T \subset [K]} 
		\bigcup _{\substack{B \subset {\cal B}_T\\ 
		|B| = \frac{n}{2} - |T|}} 
		{\cal C}_{B}.
	\end{equation}	
We will first show that almost all sets $A$ in ${\binom {[n]}{n/2}}$ are of the form $A = B({\bf x})$ where 
$T \subset [K]$ and $B \in {\cal B}_T$ with $|T| \geq mK/2^{m+1}= n/ 2^{m+1}$. To see this, given a set $A \subset [n]$, let $A_i = A \cap I_i$ for all $i\in [K]$. We will say that $A \subset [n]$ is \emph{bad} if $T(A) = \{i\in [K]: |A_i| =1\}$ satisfies 
$|T(A)| \leq \frac {m}{2^{m+1}}K$. We claim that there are at most $O({e^{-n^{1/2}/2}}{2^n})$ sets are bad. Indeed, if we select $A \subset [n]$ uniformly at random, we have ${\mathbb P}(|A_i| = 1) = m/2^m$, which gives ${\mathbb E}(|T(A)|) = \frac {mK}{2^m} = \frac{n}{2^m}$. 
As $|A_i| = 1$ for each $i\in [K]$ independently, by Chernoff's inequality, we find that ${\mathbb P}\big (|T(A)| - \frac {n}{2^m} \leq 
- \frac {n}{2^{m+1}} \big ) \leq e^{- \frac {n}{2^{m+1}}}$. 
As $m \leq \log _2(n/d^2)/2 \leq  \frac {\log _2 n }{2}$ we find that 
${\mathbb P}(A \mbox { is bad} ) \leq e^{-n^{1/2}/2}$. Equivalently, $|\{A \subset [n]: A \mbox { is bad}\}| 
= O(e^{-n^{1/2}}2^n)$.

Now suppose that $T \subset [K]$ with $|T| \geq n/2^{m+1}$ and $B \in {\cal B}_T$. Note that given ${\bf x}, {\bf y} \in [m]^T$, if ${\bf y}$ $d$-dominates ${\bf x}$ then 
$\pat {B({\mathbf x})} {B({\mathbf y})} = \alt d$. Noting that as $m = \lfloor \log _2(n/d^2)/2 \rfloor $ we have $|T| \geq n/2^{m+1} \geq 2^md^2 \geq 2md^2$. Setting $D = |T|$, Lemma \ref{lem: auxillary m-cube lemma} therefore shows that any ${\cal A} \subset \binom {[n]}{n/2}$ which is $\alt d$-free satisfies
\begin{align}
\label{equation: sectional bound} 
|{\cal A} \cap {\cal C}_B| \leq 2m^{|T|-1} = \frac {2}{m} |{\cal C}_B|.
\end{align}
Summing over all $T\subset [K]$ and $B \in {\cal C}_T$, combined with \eqref{equation: powerset decomposition} and 
\eqref{equation: sectional bound}, this gives 
	\begin{align*}
		|{\cal A}|  
			\leq 
		\sum _{T \subset [K]} | {\cal A} \cap 
		\bigcup _{\substack{B \in {\cal B}_T\\ 
		|B| = n/2 - |T|}} {\cal C}_B| 
			& \leq 
		\big | \big \{A\subset [n]: A \mbox{ bad} \big \} 
		\big | 
			+
		\sum _{\substack{T \subset [K]:\\ 
		|T| \geq 2md^2}} 
		\sum _{B \in {\cal B}_T}
		|{\cal A} \cap {\cal C}_B| 	\\
			& \leq 
		O\Big (\frac {2^n}{{e^{{\sqrt{n}}/{2}}}}\Big ) 
			+
		\sum _{\substack{T \subset [K]:\\ 
		|T| \geq 2md^2}} 
		\sum _{\substack{B \in {\cal B}_T\\ 
		|B| = n/2 - |T|}}
		\frac {2}{m}|{\cal C}_B|\\
		& \leq 
		\frac {2 + o(1)}{m} \binom {n}{n/2}.
	\end{align*}
This completes the proof of the theorem.
\end{proof}

\section{Concluding remarks and open problems}

In this paper we proved bounds on the size of families $\A \subset {\cal P}[n]$ which avoid a $d$-balanced pattern $P$. Our proof shows that such families satisfy 
	\begin{equation*}
		|\A | = O(a_dn^{-c_d}2^n),
	\end{equation*}
where $a_k = (8d)^{5d}$ and $c_d = 6d8^{-d}$. In particular, families $\A$ which avoid a $d$-balanced pattern with 
$d < c \log \log n$ satisfy $|{\cal A}|  = o(2^n)$ for some absolute constant $c>0$. It would be interesting to improve the density bound here and/or extend the range of $d$ for which this zero density property holds.

Another interesting question is the following: which balanced pattern $P$ has the strongest effect on the density of $P$-free families ${\cal A} \subset {\cal P}[n]$? That is, what is $\min _{P} \delta (n,k,P)$, where the minimum is taken over all balanced patterns $P$? If instead of patterns we only forbid intersection sizes (as discussed in the Introduction) then there are a number of very strong density results for subsets of ${\cal P}[n]$. For example, the Frankl-R\"odl \cite{Frankl-Rodl} theorem shows that given $\epsilon >0$, if ${\cal A} \subset {\cal P}[n]$ and $|A \cap B| \neq t$ for some $\epsilon n \leq t \leq (1/2 - \epsilon )n$ then 
$|{\cal A}| \leq (2-\delta )^n$, where $\epsilon  = \epsilon (\delta )>0$. It would be very interesting to know if there exists a pattern which forces a superpolynomial density in $n$. That is, does there an increasing sequence of naturals $(n_{k})_{k\in {\mathbb N}}$ and balanced patterns $(P_k)_k$ with $\delta (n_k,n_k/2,P_k) = n_k^{-\omega _k(1)}$ for some function $\omega _k(1)$ tending to infinity with $k$?

Lastly, how large can $d$ be (as a function of $n$) while still giving $\delta (n, n/2, \alt d) \to 0$ as $n \to \infty$. Theorem \ref{thm: alternating pattern theorem} proves that this holds for any $d = o(\sqrt n)$.

\end{document}